%

\documentclass[aap,MSNbibl,citesort,dvips]{arximspdf}
\usepackage{graphicx}

%

\doi{10.1214/10-AAP718}
\volume{21}
\issue{3}
\pubyear{2011}
\firstpage{1053}
\lastpage{1072}

\makeatletter

\newtheorem{theorem}{Theorem}

\newtheorem{lemma}[theorem]{Lemma}
\newtheorem{proposition}[theorem]{Proposition}
\newtheorem{corollary}[theorem]{Corollary}

\newproclaim{remark}{Remark}

\newcommand{\eps}{\varepsilon}

\newcommand{\R}{\mathbb{R}}
\newcommand{\N}{\mathbb{N}}
\newcommand{\cP}{{\mathcal P}}
\newcommand{\cN}{{\mathcal N}}
\newcommand{\cH}{{\mathcal H}}

\makeatother

\begin{document}
\begin{frontmatter}

\title{Hamilton cycles in random geometric graphs}
\runtitle{Hamilton cycles in random geometric graphs}

\begin{aug}
\author[A]{\fnms{J{\'o}zsef} \snm{Balogh}\thanksref{t1}\ead[label=e1]{jobal@math.uiuc.edu}},
\author[B]{\fnms{B\'ela} \snm{Bollob\'as}\thanksref{t2}\ead[label=e2]{b.bollobas@dpmms.cam.ac.uk}},
\author[C]{\fnms{Michael} \snm{Krivelevich}\thanksref{t3}\ead[label=e3]{krivelev@post.tau.ac.il}},
\author[D]{\fnms{Tobias} \snm{M\"uller}\thanksref{t4}\ead[label=e4]{tobias@cwi.nl}} and
\author[E]{\fnms{Mark} \snm{Walters}\corref{}\ead[label=e5]{M.Walters@qmul.ac.uk}}
\runauthor{J. Balogh et al.}
\affiliation{University of California and University of Illinois,
University~of~Cambridge~and University of Memphis, Tel Aviv
University, Centrum~voor~Wiskunde~en~Informatica, and Queen Mary, University~of~London}
\address[A]{J. Balogh\\
Department of Mathematics\\
University of Illinois\\
Urbana, Illinois 61801\\
USA\\
\printead{e1}}
\address[B]{B. Bollob\'as\\
Department of Pure Mathematics\\
\quad and Mathematical Statistics\\
University of Cambridge\\
Cambridge, CB3 0WB\\
United Kingdom\\
\printead{e2}}
\address[C]{M. Krivelevich\\
School of Mathematical Sciences\\
Tel Aviv University\\
Ramat Aviv 69978\\
Israel\\
\printead{e3}}
\address[D]{T. M\"uller\\
Centrum voor Wiskunde\hspace*{35pt}\\
\quad en Informatica\\
P.O. Box 94079\\
1090 GB Amsterdam\\
The Netherlands\\
\printead{e4}}
\address[E]{M. Walters\\
School of Mathematical Sciences\\
Queen Mary, University of London\\
London, E1 4NS\\
United Kingdom\\
\printead{e5}}
\end{aug}

\thankstext{t1}{This material is based upon work supported by NSF
CAREER Grant DMS-07-45185 and DMS-06-00303, UIUC Campus Research Board
Grants 09072 and 08086 and OTKA Grant K76099.}
\thankstext{t2}{Supported in part by NSF Grants DMS-05-05550,
CNS-0721983 and CCF-0728928 and ARO Grant W911NF-06-1-0076.}
\thankstext{t3}{Supported in part by USA-Israel BSF Grant
2006322, by Grant 1063/08 from the Israel Science Foundation and by
a Pazy memorial award.}
\thankstext{t4}{Supported in part by a VENI grant from Netherlands
Organisation for Research (NWO). The results in this paper are based on
work done while at Tel Aviv University, partially supported through an
ERC advanced grant.}

\received{\smonth{4} \syear{2009}}
\revised{\smonth{1} \syear{2010}}

%
\begin{abstract}
We prove that, in the Gilbert model for a random geometric graph,
almost every graph becomes Hamiltonian exactly when it first becomes
2-connected. This answers a question of Penrose.

We also show that in the $k$-nearest neighbor model, there is a
constant $\kappa$ such that almost every $\kappa$-connected graph has a
Hamilton cycle.
\end{abstract}

%
\begin{keyword}[class=AMS]
\kwd{05C80}
\kwd{60D05}
\kwd{05C45}.
\end{keyword}
\begin{keyword}
\kwd{Hamilton cycles}
\kwd{random geometric graphs}.
\end{keyword}

\end{frontmatter}

\section{Introduction}\label{intro}
In this paper we mainly consider one of the frequently studied models
for random geometric graphs, namely the Gilbert model. Suppose that
$S_n$ is a $\sqrt n\times\sqrt n$ box and that $\cP$ is a Poisson
process in it with density 1. The points of the process form the
vertex set of our graph. There is a parameter $r$ governing the edges:
two points are joined if their (Euclidean) distance is at most $r$.

Having formed this graph we can ask whether it has any of the standard
graph properties, such as connectedness. As usual, we shall only
consider these for large values of $n$. More formally, we say that
$G=G_{n,r}$ has a property \textit{with high probability} (abbreviated to
whp) if the probability that $G$ has this property tends to one as
$n$ tends to infinity.

Penrose~\cite{Pen} proved that the threshold for connectivity is $\pi
r^2=\log n$. In fact he proved the
following very sharp result: suppose $\pi r^2=\log n +\alpha$ for some
constant~$\alpha$. Then the probability that $G_{n,r}$ is connected
tends to $e^{-e^{-\alpha}}$.

He also\vspace*{1pt} generalized this result to find the threshold for
$\kappa$-connectivity for \mbox{$\kappa\ge2$}: namely $\pi r^2 =\log n
+(2\kappa-3)\log\log n$. [Since the reader may be surprised that this
formula does not work for $\kappa=1$ we remark that this is due to
boundary effects: the threshold for $\kappa$-connectivity is the
maximum of two quantities: $\log n+(\kappa-1)\log\log n$ to
$\kappa$-connect the central points and $\log n+(2\kappa-3)\log\log n$
to \mbox{$\kappa$-connect} the points near the boundary. If one worked on the
torus instead of the square, then these boundary effects would
disappear.]

Moreover, he found the ``obstruction'' to
$\kappa$-connectivity. Suppose we fix the vertex set (i.e., the point
set in $S_n$) and ``grow'' $r$. This gradually adds edges to the
graph. For a monotone graph property $P$ let $\cH(P)$ denote the
smallest $r$ for which the graph on this point set has the property
$P$. Penrose showed that
\[
\cH\bigl(\delta(G)\ge\kappa\bigr)=\cH\bigl(\mathrm{connectivity}(G)\ge\kappa\bigr)
\]
whp: that is, as soon as the graph has minimum degree $\kappa$ it is
$\kappa$-connected whp.

He also considered the threshold for $G$ to have a Hamilton cycle.
Obviously a necessary condition is that the graph is 2-connected. In
the normal (Erd\H{o}s--R\'enyi) random graph this is also a sufficient
condition in the following strong sense. If we add edges to the graph
one at a time, then the graph becomes Hamiltonian exactly when it
becomes 2-connected (see
\cite{BB-Hamilton,Korshunov,KS} and~\cite{Posa}).

Penrose asked whether the same is true for a random geometric
graph. In this paper we prove the following theorem answering this
question.
\begin{theorem}\label{t:2d-gilbert}
Suppose that $G=G_{n,r}$ is the two-dimensional Gilbert model. Then
\[
\cH(\mbox{$G$ is 2-connected})=\cH(\mbox{$G$ has a Hamilton cycle})
\]
whp.
\end{theorem}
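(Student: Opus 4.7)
\emph{Plan of proof.} The easy direction is immediate and deterministic: a Hamilton cycle is 2-connected, so $\cH(\text{Hamilton}) \ge \cH(\text{2-connected})$ always. For the converse, Penrose's theorem tells us that the 2-connectivity threshold occurs at $\pi r^2 = \log n + \log\log n + O(1)$, and at this threshold the graph has just attained $\delta(G) \ge 2$. The aim therefore is to show: whp, as soon as $G_{n,r}$ has minimum degree $2$, it already contains a Hamilton cycle.

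I would attack this by a tessellation-plus-absorption scheme. Dissect $S_n$ into square cells of side $\eta r$ for a small absolute constant $\eta>0$, and declare a cell \emph{dense} if it contains at least $c\eta^2 r^2$ points, \emph{sparse} otherwise. Standard Chernoff and union-bound estimates show that whp every vertex of a dense cell has $\Theta(\log n)$ neighbours lying in dense cells, so the subgraph induced on the union of dense cells is very well-connected locally. Group the small cells into super-cells and traverse the resulting grid of super-cells in a boustrophedon (snake) pattern; inside each super-cell the strong local density lets one build a Hamilton path through all of its dense-cell vertices, and consecutive super-cells can be joined by an edge at their common boundary. This yields a Hamilton cycle $C$ that covers every vertex lying in a dense cell.

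The nontrivial step is to absorb the remaining vertices, those in sparse cells, into $C$. Such a vertex $v$ may have degree as low as $2$, in which case both of its edges must appear in the final Hamilton cycle. The key geometric fact, furnished by the 2-connectivity hypothesis combined with Penrose's sharp threshold results, is that low-degree vertices are \emph{well separated}: any two of them lie at distance much larger than $r$, and each has all of its neighbours sitting inside dense cells. To absorb such a $v$, one looks for two of its neighbours $u,w$ that are consecutive along $C$ and replaces the edge $uw$ by the path $u\,v\,w$. To guarantee that such a pair exists, I would either perform P\'osa-style rotations of the short stretch of $C$ lying in the dense cells near $v$ to force two of $v$'s neighbours to be adjacent on the cycle, or --- more robustly --- pre-construct $C$ with a reservoir of interchangeable local segments near each sparse vertex, so that one of many candidate pairs automatically qualifies.

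The main obstacle will be the execution of this absorption step: verifying that the geometry of the low-degree vertices at the 2-connectivity threshold is regular enough for all absorptions to be carried out \emph{simultaneously and without interference}. Concretely, one must show (a) the number of low-degree vertices is small (polylogarithmically so), (b) distinct low-degree vertices are at distance $\gg r$ apart, so their local absorption regions are disjoint, and (c) the local geometry around each such vertex is typical enough that the required P\'osa-type rerouting of $C$ actually finds a consecutive pair of neighbours on the cycle. Each of these reduces, in principle, to a calculation using Penrose's sharp threshold estimates, but combining them into a uniform absorption argument --- and, separately, handling vertices near the boundary of $S_n$, where the Poisson intensity effectively halves or quarters --- is the delicate technical part of the proof.
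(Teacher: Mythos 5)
The overall architecture---tessellate, build a cycle through the ``good'' region, absorb the rest---matches the paper's plan, but there is a genuine gap in your treatment of what the ``rest'' looks like. You assume that the troublesome vertices are isolated low-degree vertices, that any two of them are at distance $\gg r$, and that each such vertex has \emph{all} of its neighbours in dense cells, so that a single $uw \mapsto uvw$ edge-replacement absorbs it. None of these assumptions is justified by Penrose's results, and the last two are false in general. At the $2$-connectivity threshold the graph certainly has no degree-$1$ vertices, but there can be \emph{clusters} of sparse cells, and a sparse-cell vertex may have all of its neighbours in other sparse cells. Worse, a shell of sparse cells can surround a pocket of \emph{dense} cells and cut it off from the main sea: those dense cells contain arbitrarily many points, all of high degree, but the only routes out pass through a chain of sparse cells. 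Your absorption step---swap one edge of the big cycle for a two-edge detour through a single $v$---simply does not apply to a cut-off cluster of many vertices, and P\'osa rotations local to the cycle near $v$ cannot create edges into a pocket that has no edges to the sea at all.

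This cut-off-pocket phenomenon is precisely what the bulk of the paper's work addresses, and it is exactly where $2$-connectivity enters. The paper classifies cut-off squares into ``close'' ones (adjacent in the tessellation graph to the sea) and ``far'' ones (not so adjacent, hence possibly full of points but unreachable locally). The only use of $2$-connectivity is to produce \emph{two vertex-disjoint paths} in $G$ from the far region out to the sea, with endpoints $Q_1,Q_2$; one then needs the structural lemma that the relevant annulus of sea squares around the non-full component is connected in the auxiliary square graph, so that $Q_1$ and $Q_2$ can be rejoined by a path lying entirely near that component without interfering with the handling of other components. This produces a closed ``lollipop'' starting and ending in a single sea square, swallowing the whole far region, which the final spanning-tree traversal then splices in. Your proposal contains no analogue of the far/close distinction, no bound on how large or how oddly shaped the cut-off region can be, and no mechanism to route two disjoint escape paths and close them up; so as written it proves Hamiltonicity only under the (unjustified and false) hypothesis that each sparse vertex is locally isolated with a dense neighbourhood, not under $2$-connectivity alone. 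To repair it you would essentially have to reinvent the paper's structural lemmas on non-full components, cutoff squares, and connectivity of the surrounding sea annulus.
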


Combining this with Penrose's results mentioned above we see
that, if $\pi r^2=\log n+\log\log n+\alpha$, then the\vspace*{1pt} probability that
$G$ has a Hamilton cycle tends to $e^{-e^{-\alpha}-\sqrt\pi
e^{-\alpha/2}}$~(the second term in the exponent is the contribution
from points near the boundary of the square).

Some partial progress has been made on this question previously.
Petit~\cite{Petit} showed that if $\pi r^2/\log n$ tends to infinity,
then $G$ is, whp, Hamiltonian, and D\'iaz, Mitsche and
P\'erez~\cite{DMP} proved that if $\pi r^2>(1+\eps)\log n$ for some
$\eps>0$ then $G$ is Hamiltonian whp. (Obviously, $G$ is not
Hamiltonian if $\pi r^2<\log n$ since whp $G$ is not connected!)
Finally using a similar method to~\cite{DMP} together with significant
case analysis, Balogh, Kaul and Martin~\cite{BKM} proved for the
special case of the $\ell_\infty$ norm in two dimensions
that the graph does become Hamiltonian exactly when it becomes 2-connected.

Our proof generalizes to higher dimensions and to other norms.
The Gilbert model makes sense with any norm and in any number of
dimensions: we let $S_n^d$ be the $d$-dimensional hypercube with
volume $n$. We prove the analog of Theorem~\ref{t:2d-gilbert} in
this setting.
\begin{theorem}\label{t:multi-dim-gilbert}
Suppose that the dimension $d\ge2$ and \mbox{$\|\cdot\|$}, a~$p$-norm for some $1\le p\le\infty$, are fixed. Let $G=G_{n,r}$ be
the resulting Gilbert model. Then
\[
\cH(\mbox{$G$ is 2-connected})=\cH(\mbox{$G$ has a Hamilton cycle})
\]
whp.
\end{theorem}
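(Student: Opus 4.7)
The plan is to argue that the proof of Theorem~\ref{t:2d-gilbert} is essentially local, depending only on the boundedness of the unit $p$-ball and on the fact that Penrose's $\kappa$-connectivity results hold in this generality, so that it lifts with only cosmetic changes to arbitrary fixed dimension $d$ and $p$-norm. At the 2-connectivity threshold the expected degree is $\Theta(\log n)$, so most of $S_n^d$ is a dense ``sea'' of vertices and the obstructions to both 2-connectivity and Hamiltonicity are rare, well-separated defects.

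Concretely, I would tile $S_n^d$ by axis-aligned cubes of side length $cr$, where the constant $c=c(d,p)$ is chosen so that any two points lying in cubes at $\ell_\infty$-distance at most $2$ are joined by an edge of $G$; this uses only the equivalence of $p$-norms in fixed dimension. Call a cube \emph{dense} if it contains more than a large constant number of points, and call a vertex \emph{good} if the cube containing it together with all $3^d-1$ neighbouring cubes are dense, and \emph{bad} otherwise. A first-moment calculation using the $p$-ball volume in place of $\pi r^2$ shows, as in Penrose's analysis, that \whp\ the bad vertices are few and well-separated, and that this same bad set controls the $2$-connectivity threshold. The Hamilton cycle is then constructed in two phases: first a snake pattern through the dense cubes produces a Hamilton path on the good vertices with prescribed entry and exit edges at each bad cluster; second, a local surgery routes this path through each bad cluster using the two edges provided by $2$-connectivity, closing up into a cycle.

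The hard part is the local surgery. Near a vertex of degree exactly $2$, both of its incident edges must be used, which fixes the two directions in which the cycle enters and leaves the surrounding defect region and forces one to find a Hamilton path in the cluster of dense cubes around it between two possibly awkwardly placed endpoints. In two dimensions this is handled by a careful planar snake construction; in higher dimensions one has strictly more room to route around obstacles, since adjacent dense cubes form cliques of size $\Theta(\log n)$ and the cube-adjacency graph has connectivity growing with $d$, so the two-dimensional argument goes through after a uniform reformulation. Consequently the bulk of the work lies in stating the local surgery lemmas in a dimension- and norm-independent form and in verifying the probability estimates for the cube classification in this generality, rather than in any genuinely new combinatorial obstacle.
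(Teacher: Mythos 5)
Your high-level plan is right: tessellate, classify defect regions, use the sea of dense cubes as a scaffold, and splice in the sparse regions using two-connectivity. This is indeed the shape of the argument in the paper. But your claim that the generalization to $d\ge 3$ and arbitrary $p$-norms is ``essentially local\ldots with only cosmetic changes'' is where the proposal goes wrong; the paper itself flags ``significant extra technicalities,'' and there are three genuine gaps your plan does not close.

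First, you treat the tail calculation as a matter of replacing $\pi r^2$ by the $p$-ball volume. That fails in dimension $d\ge 3$ because boundary effects dominate: a point near a face of $S_n^d$ has roughly half the ball available, so the probability of a low-degree vertex there scales like $e^{-A/2}$ against $\Theta(n^{(d-1)/d})$ boundary points, which shifts the threshold. The paper therefore defines $r_0$ implicitly by $\int_{S_n^d}\exp(-|\{x\}_{r_0}|)\,dx=1$ rather than by a closed-form volume condition, and crucially calibrates the definition of a ``bad'' non-full component \emph{per location}: a component containing a cube $Q$ is bad if it has at least $(1+1/c)|Q_{r_0}|/s^d$ cubes, where $|Q_{r_0}|$ (the Lebesgue measure of the $r_0$-neighbourhood of $Q$ intersected with $S_n^d$) is smaller near faces, edges, and corners. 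A uniform ``dense versus sparse'' threshold of the kind you describe does not give the needed size bounds near the boundary, and the first-moment bound you sketch does not obviously sum to $o(1)$ once boundary contributions are included.

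Second, your heuristic ``one has strictly more room in higher dimensions'' is not a substitute for the geometric input that actually makes the far-cubes argument work. To show that all far cubes lie within $\ell_\infty$-distance $c/10$ of one another (and hence form a clique in $G$), the paper invokes an isoperimetric-type inequality of Penrose (Proposition 2.1 of \cite{Pen2}): if a set $F$ in the positive orthant has $\ell_\infty$ diameter at least $r/10$, then $|F_r|\ge |F|+|\{x\}_r|+\eta r^d$ for the extremal point $x$ of $F$. This is what forces the surrounding moat of close (hence non-full) cubes to be strictly larger than the location-dependent bad-component bound, yielding a contradiction. Without this quantitative statement, there is no bound on the spread of the far region.

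Third, the connectedness-of-the-boundary lemma (\Lm{1-boundary}) used to route from $Q_2$ back to $Q_1$ inside $N_{2c}\cap A$ has a genuinely different proof in $d\ge 3$. The two-dimensional proof goes through the planar dual lattice, which has no direct analogue in higher dimensions. The paper replaces it with a circuit-contraction and parity argument (\Lm{1-boundary-hd}), essentially a combinatorial simple-connectivity argument for $\Z^d$. Your remark that ``the cube-adjacency graph has connectivity growing with $d$'' does not supply this lemma; it is a statement about boundaries of subsets of the lattice being diagonally connected, not about vertex connectivity of the lattice itself. Absent this lemma, there is no guarantee that the two entry points into the sea provided by $2$-connectivity can be joined without re-entering the defect.
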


The proof is very similar to that of
Theorem~\ref{t:2d-gilbert}. However, there are some significant extra
technicalities.

To give an idea why these occur consider connectivity in the Gilbert
model in the cube $S_n^3$ (with the Euclidean norm). Let $A$ be the
volume of a sphere of radius~$r$. We count the expected number of
isolated points in the process which are away from the boundary of
the cube. The probability a point is isolated is $e^{-A}$, so the
expected number of such points is $ne^{-A}$, so the threshold for the
existence of a central isolated point is about $A=\log n$.

However, consider the probability that a point near a face of the cube
is isolated: there are approximately $ n^{2/3}$ such points, and the
probability that they are isolated is about $e^{-A/2}$ (since about
half of the sphere about the point is outside the
cube~$S_n^3$). Hence,\vspace*{-1pt} the expected number of such points is
$n^{2/3}e^{-A/2}$, so the threshold for the existence of an isolated
point near a face is about $A=\frac43 \log n$. In other words
isolated points are much more likely near the boundary. These boundary
effects are the reason for many of the extra technicalities.

We remark that Theorem~\ref{t:multi-dim-gilbert} is trivially true for
$d=1$: indeed, if $G$ is 2-connected then there are two vertex
disjoint paths from the left-most vertex to the right-most vertex. By
adding any remaining vertices to one of these paths these two paths
form a Hamilton cycle.

\subsection*{The $k$-nearest neighbor model}

We also consider a second model for random geometric graphs: namely
the $k$-nearest neighbor graph. In this model the initial setup is
the same as in the Gilbert model: the vertices are given
by a Poisson process of density one in the square $S_n$, but this time
each vertex is joined to its $k$ nearest neighbors (in the Euclidean
metric) in the box. This naturally gives rise to a $k$-regular
directed graph,\vadjust{\goodbreak} but we form a simple graph $G=G_{n,k}$ by ignoring the
direction of all the edges. It is easily checked that this gives us a
graph with degrees between $k$ and $6k$.

Xue and Kumar~\cite{XuKu} showed that there are constants $c_1,c_2$
such that if $k<c_1\log n$, then the graph $G_{n,k}$ is, whp, not
connected, and that if $k>c_2\log n$ then $G_{n,k}$ is, whp,
connected. Balister et al.~\cite{BBSW1}
proved reasonably good bounds on the constants: namely $c_1=0.3043$
and $c_2=0.5139$, and later~\cite{BBSW3} proved that there is some
critical constant $c$ such that if $k=c'\log n$ for $c'<c$, then the
graph is disconnected whp, and if $k=c'\log n$ for $c'>c$, then it is
connected whp. Moreover, in~\cite{BBSW2}, they showed that in the
latter case the graph is $s$-connected whp for any fixed $s\in\N$.

We would like to prove a sharp result like the above; that is, that as
soon as the graph is 2-connected it has a Hamilton cycle. However, we
prove only the weaker statement that some (finite) amount of
connectivity is sufficient. Explicitly, we show the following.
\begin{theorem}\label{t:knear}
Suppose that $k=k(n)$, that $G=G_{n,k}$ is the two-dimensional
$k$-nearest neighbor graph (with the\vspace*{1pt} Euclidean norm) and that $G$
is $\kappa$-connected for $\kappa=5\cdot10^7$ whp. Then $G$ has a
Hamilton cycle whp.
\end{theorem}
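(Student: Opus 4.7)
The plan is to adapt the cell-tessellation strategy used for the Gilbert model in \Th{2d-gilbert}: partition $S_n$ into small cells, find a ``backbone'' Hamilton cycle that walks through the cells in a snake order, and then absorb the remaining vertices by local rotations. The $\kappa$-connectivity hypothesis replaces the sharp 2-connectivity threshold available in the Gilbert case, and it is used both to exclude small vertex cuts globally and to supply enough edge redundancy to repair local pathologies caused by the knn structure.

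The first step is to extract from $G$ a convenient deterministic subgraph. The hypothesis forces $k=\Omega(\log n)$, since $G_{n,k}$ is disconnected \whp\ once $k$ falls below a fixed multiple of $\log n$ \cite{XuKu,BBSW1}. Fix a radius $r$ with $\pi r^2=\alpha k$ for a small constant $\alpha<1$; a Chernoff plus union bound shows that \whp\ every closed disc of radius $r$ in $S_n$ contains fewer than $k$ points of $\cP$. Hence whenever two points of $\cP$ are within distance $r$, each lies among the other's $k$ nearest neighbours, so on this event $G$ contains the Gilbert graph at radius $r$ as a subgraph, and all local constructions below will take place inside this guaranteed subgraph.

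Next I would tessellate $S_n$ into square cells of side $\beta r$ with $\beta$ a small constant, so that \whp\ every cell contains $\Theta(\beta^2 k)$ points, any two points inside the same cell are joined, and any two points in adjacent cells are joined. The cell-adjacency grid carries a Hamilton cycle in snake order, which lifts to a cycle $C_0$ in $G$ passing through a single vertex of each cell. The construction near the corners and edges of $S_n$ proceeds as in the Gilbert case, with the only change being that abnormally populated cells are detoured by invoking $\kappa$-connectivity instead of 2-connectivity.

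Finally one converts $C_0$ into a Hamilton cycle by a local surgery: in each cell, an edge of $C_0$ passing through it is replaced by a path threading the previously uncovered vertices, using only edges internal to the cell and its neighbours. This is routine in typical cells, which are cliques of the correct size in the Gilbert subgraph. The main obstacle, and the reason for the very large constant $\kappa=5\cdot 10^7$, is to absorb the rare but unavoidable bad cells where the knn structure deviates significantly from the Gilbert subgraph, together with boundary anomalies near $\partial S_n$. For each such cell both the backbone and the surgery must be rerouted using vertex-disjoint paths supplied by $\kappa$-connectivity, and the heart of the argument is to show that every rerouting step costs only a bounded amount of connectivity, so that the total bill of all surgeries stays within the budget of $5\cdot 10^7$. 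Quantifying this carefully --- rather than merely asserting that some constant suffices --- is the bulk of the technical work.
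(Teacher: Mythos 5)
The proposal has a genuine gap at its foundation. You claim that after tessellating into cells of side $\beta r$ with $\pi r^2=\alpha k$ and $\alpha,\beta<1$, ``whp every cell contains $\Theta(\beta^2 k)$ points''. This is false, and it is false for the fundamental reason that makes this theorem nontrivial. The expected number of points per cell is $s^2=\beta^2\alpha k/\pi$, which, since $k\lesssim 0.52\log n$, is at most a small constant times $\log n$; meanwhile there are $n/s^2$ cells. The probability that a fixed cell is empty is $e^{-s^2}\geq n^{-c}$ for some $c<1$, so the expected number of empty cells is at least $n^{1-c}/s^2\to\infty$; a Chernoff plus union bound cannot rescue this. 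Whp there will be many cells containing very few or zero points, and the entire difficulty of the theorem is to route a Hamilton cycle around and through those sparse regions. This is exactly what the paper's Stages 2--4 accomplish, by analysing components of ``non-full'' cells: bounding their size ($<7000$ cells whp), bounding the set of full cells they cut off from the main sea (an isoperimetric estimate giving $\leq 2.5\cdot10^7$ cells), and then invoking $\kappa$-connectivity to find up to $5\cdot10^7$ vertex-disjoint paths from those cutoff cells back to the sea.

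You do later acknowledge ``rare but unavoidable bad cells'', but this contradicts your earlier claim, and your diagnosis of their source --- ``where the knn structure deviates significantly from the Gilbert subgraph'' --- is wrong. The problematic cells are the under-populated ones, a property of the underlying point process, not a discrepancy between $k$-nearest-neighbour edges and Gilbert edges; indeed in the paper the bad cells are defined purely by point counts, before any edges are considered. This misdiagnosis matters because it leads you away from the structural lemmas that are actually needed (the bound on non-full component size, the isoperimetric bound on the cutoff region, and the connectedness of the annulus $N_D\cap A$ around the obstacle). It also means your ``snake order'' backbone cannot work as stated: the snake would have to pass through cells that may contain no vertices at all. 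The paper instead takes a spanning tree of the connected sea of full cells and doubles it into a closed walk, which automatically avoids the bad regions and meets the rerouted paths at their attachment squares. Your high-level instinct --- use $\kappa$-connectivity to supply vertex-disjoint detours whose total cost fits within the budget $5\cdot10^7$ --- is correct, but without the structural analysis of the sparse regions the budget cannot be quantified and the proof does not go through.
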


Analogous results could be proved in higher
dimensions and for other norms but we do not do so here.

\subsection*{Binomial point process}
To conclude this section we briefly mention a closely related model:
instead of choosing the points in $S_n$ according to a Poisson process
of density one we choose $n$ points uniformly at random, and then form
the corresponding graph. This new model is very closely related to our
first model (the Gilbert model). Indeed, Penrose originally proved his
results for the Binomial Point Process but it is easy to check that
this implies them for the Poisson Process.

It is very easy to modify our proof to this new model. Indeed, in
very broad terms each of our arguments consists of two steps: first we
have an essentially trivial lemma that says the random points are
``reasonably'' distributed, and then we have an argument saying that
if the points are reasonably distributed and the resulting graph is
two-connected then the resulting graph necessarily has a Hamilton
cycle. The second of these steps is entirely deterministic, so only the
essentially trivial lemma needs modifying.

\section{\texorpdfstring{Proof of Theorem \protect\ref{t:2d-gilbert}}{Proof of Theorem 1}}\label{s:gilbert-2d}

We divide the proof into five parts: first we tile the square $S_n$
with small squares in a standard tessellation argument. Second we
identify ``difficult'' subsquares. Roughly, these will be squares
containing only a few points, or squares surrounded by squares
containing only a few points. Third we prove some lemmas about the
structure of the difficult subsquares. In stage 4 we deal with the
difficult subsquares. Finally we use the remaining easy subsquares to
join everything together.\vadjust{\goodbreak}

\subsection*{Stage 1: Tessellation}

Let $r_0=\sqrt{(\log n)/\pi}$ (so $\pi r_0^2=\log n$), and let $r$ be
the random variable $\cH(G$ is 2-connected$)$. Let
$s=r_0/c=c'\sqrt{\log n}$ where $c$ is a large constant to be chosen
later (1000 will do). We tessellate the box $S_n$ with small squares
of side length $s$. Whenever we talk about distances between squares
we will always be referring to the distance between their
centers. Moreover, we will divide all distances between squares by
$s$, so, for example, a~square's four nearest neighbors all have
distance one.

By Penrose's result~\cite{Pen2} mentioned in the \hyperref[intro]{Introduction} we
may assume that $(1-1/2c)r_0<r<(1+1/2c)r_0$: formally the collection
of point sets which do not satisfy this has measure tending to zero as
$n$ tends to infinity, and we ignore this set.

Hence points in squares at distance $\frac{r-\sqrt2s}{s} \ge
\frac{r_0-2s}{s}=c-2$ are always joined, and points in squares at
distance $\frac{r+\sqrt2 s}s\le\frac{r_0+2s}s=c+2$ are never
joined.

\subsection*{Stage 2: The ``difficult'' subsquares}

We call a square \textit{full} if it contains at least $M$ points for
some $M$ to be determined later ($10^7$ will do), and \textit{nonfull}
otherwise. Let $N_0$ be the set of nonfull squares. We say two
nonfull squares are joined if their $\ell_\infty$ distance is at most
$4c-1$ and define $\cN$ to be the collection of nonfull components.

First we bound the size of the largest component of nonfull squares
(here, and throughout this paper, we use size to refer to the number
of vertices in the component).
\begin{lemma}\label{l:non-full-size}
For any $M$, the largest component of nonfull squares in the above
tesselation has size at most
\[
U=\lceil\pi(c+2)^2
\rceil
\]
whp.

Also, the largest component of nonfull squares including a square
within $c$ of the boundary of $S_n$ has size at most $U/2$ whp.
Finally, there is no nonfull square within distance $Uc$ of a corner
whp.
\end{lemma}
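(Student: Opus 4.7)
The plan is a first-moment calculation: bound the probability that a small square is non-full, bound the number of connected configurations of a given size in the non-full auxiliary graph, multiply, and show the product is $o(1)$.

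Each small square has area $s^2 = (\log n)/(\pi c^2)$, so its point count is Poisson with mean $s^2$. Because $M$ is a fixed constant, the probability that any given square is non-full is
$$q := \sum_{j<M}\frac{(s^2)^j}{j!}\,e^{-s^2} = O_{c,M}\bigl((\log n)^{M-1}\,n^{-1/(\pi c^2)}\bigr),$$
and disjoint squares contain independent Poisson point sets, so the probability that any specified collection of $k$ disjoint squares are all non-full is $q^k$.

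Next, two squares are joined in the non-full auxiliary graph only if their $\ell_\infty$ distance is at most $4c-1$, so each has at most $\Delta := (8c-1)^2 - 1$ neighbours there. The standard tree-enumeration bound gives at most $(e\Delta)^{k-1}$ connected $k$-subsets containing any fixed square. Hence the expected number of non-full connected $U$-subsets is at most
$$\frac{n}{s^2}\,(e\Delta)^{U-1}\,q^U = O\bigl(n \cdot n^{-U/(\pi c^2)} \cdot (\log n)^{O(1)}\bigr).$$
The crucial choice $U = \lceil\pi(c+2)^2\rceil$ gives $U/(\pi c^2) \ge (c+2)^2/c^2 = 1 + 4/c + 4/c^2$, so the net exponent of $n$ is at most $-4/c$ and the polylog factors are swamped; by Markov, whp there is no non-full connected $U$-subset, which means every non-full component has size at most $U$.

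The boundary and corner claims follow from the same calculation with the starting-square count adjusted. Within distance $c$ of the boundary there are only $O(c\sqrt{n}/s) = O(\sqrt{n/\log n})$ starting squares, and repeating the argument with $\lceil U/2\rceil$ in place of $U$ works since $U/(2\pi c^2) > 1/2$. Within distance $Uc$ of any corner there are only $O(U^2) = O_c(1)$ squares, and the expected number of non-full ones there is $O(q) = o(1)$; union-bounding over the four corners finishes. The only (mild) obstacle is arithmetic bookkeeping in the exponent: the additive $2$ in $(c+2)^2$ is precisely what provides the $4/c$ slack needed to beat the $n/s^2$ starting-square factor, and the boundary argument uses the same slack against a $\sqrt{n}/s$ factor.
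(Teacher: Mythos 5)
Your proposal is correct and follows essentially the same route as the paper: the same tree-enumeration bound for connected subsets (each fixed square lies in at most $e\Delta^{k}$ connected $k$-sets in a graph of maximum degree $\Delta$), the same Poisson tail bound for the probability that a square is non-full, the same first-moment calculation, and the same observation that the excess $(c+2)^2/c^2 - 1 \ge 4/c$ in the exponent of $n$ kills the factor $n$ (respectively $\sqrt n$) of starting squares, with a constant count plus $q\to 0$ disposing of the corners. The only cosmetic differences are that you use $n/s^2$ rather than the paper's cruder bound $n$ for the square count, and you track $\Delta=(8c-1)^2-1$ rather than $(8c)^2$; neither affects the argument.
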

\begin{pf}
We shall make use of the following simple result: suppose that $G$ is
any graph with maximal degree $\Delta$, and $v$ is a vertex in
$G$. Then the number of connected subsets of size $n$ of $G$
containing $v$ is at most $(e\Delta)^n$ (see, e.g., Problem~45 of
\cite{CTM}).

Hence, the number of potential components of size $U$ containing a
particular square is at most $(e(8c)^2)^{U}$ so, since there are less
than $n$ squares, the total number of such potential components is at
most $n(e(8c)^2)^{U}$. The probability that a square is nonfull is at
most $2s^{2M}e^{-s^2}/M!$. Hence, the expected number of components of
size at least $U$ is at most
\[
n\bigl(2s^{2M}e^{-s^2}(e(8c)^2)/M!\bigr)^{U} \le n\biggl(2(\log
n)^M\frac{e(8c)^2}{M!}\biggr)^{U}\exp\biggl(-\frac{(c+2)^2\log
n}{c^2}\biggr),\vadjust{\goodbreak}
\]
which tends to zero as $n$ tends to infinity; that is, whp, no such
component exists.

For the second part there are at most $4c\sqrt n$ squares within
distance $c$ of the boundary of $S_n$, and the result follows as above.

Finally, there are only $4U^2c^2$ squares within distance $Uc$ of a
corner. Since the probability that a square is nonfull tends to zero
we see that there is no such square whp.
\end{pf}

Note that this is true independently of $M$ which is important since
we will want to choose $M$ depending on $U$.

In the rest of the argument we shall assume that there is no nonfull
component of size greater than $U$, no nonfull component of size
$U/2$ within $c$ of an edge and no nonfull square within $Uc$ of a
corner.

Between these components of nonfull squares there are numerous full
squares. To define this more precisely let $\widehat G$ be the graph
with vertex set the small squares, and where each square is joined to
all others within $(c-2)$ of this square (in the Euclidean norm).
Since the probability a square is in $N_0$ (i.e., is nonfull) is
$1-o(1)$, the graph $\widehat G\setminus N_0$ has one giant component
consisting of almost all the squares. We call this component \textit{sea}. (We give an equivalent formal definition just before
Corollary~\ref{c:local-sea}.)

The idea is that it is trivial to find a cycle visiting every point of
the process in a square in the sea, and that we can
extend this cycle to a Hamilton cycle by adding each nonfull
component (and any full squares cut off by it) one at a time. However,
it is easier to phrase the argument by starting with the difficult
parts and then using the sea of full squares.

\subsection*{Stage 3: The structure of the difficult subsquares}

Consider one component $N\in\cN$ of the nonfull squares, and suppose
that it has size $u$. By Lemma~\ref{l:non-full-size} we know $u<U$. We
will also consider $N_{2c}$: the $2c$-blow-up of $N$: that is the set
of all squares with $\ell_\infty$ distance at most $2c$ from a square
in~$N$.

Now some full squares may be cut off from the rest of the full squares
by nonfull squares in $N$. More precisely the graph $\widehat
G\setminus N$ has one component $A=A(N)$
consisting of all but at most a bounded number of
squares (since we have removed at most $U$ squares from $\widehat G$).
We call $A^c$ the \textit{cutoff} squares.

%
\begin{figure}

\includegraphics{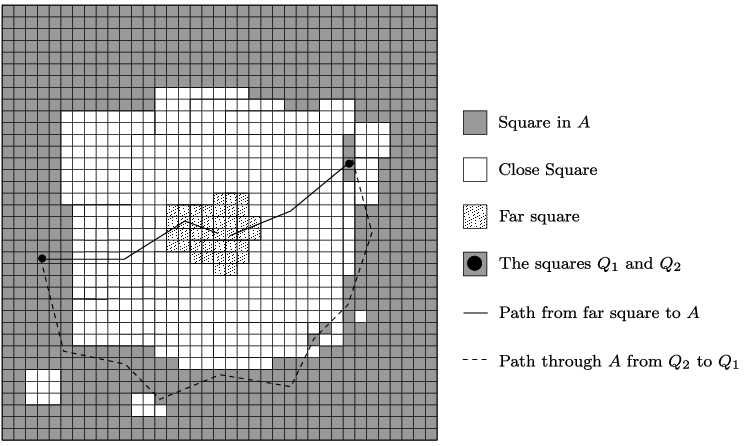}

\caption{A small part of $S_n$ containing the nonfull component $N$
and the corresponding set $A$, far squares and close squares. It
also shows the two vertex disjoint paths from the far squares to $A$
and the path joining $Q_2$ to $Q_1$ (see stage 4).}
\label{fig:far-close}
\end{figure}

We split the cutoff squares into two classes: those with a neighbor
in $A$ (in~$\widehat G$) which we think of as being ``close'' to $A$, and
the rest, which we shall call \textit{far} squares. All the close
squares must be in $N$ (since otherwise they would be part of~$A$).
However, we do not know anything about the far squares: they
may be full or nonfull. See Figure~\ref{fig:far-close} for a
picture.
\begin{lemma}\label{l:far}
No two far squares are more than $\ell_\infty$ distance $c/10$ apart.
\end{lemma}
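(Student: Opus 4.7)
I would argue by contradiction: suppose $F_1,F_2$ are far squares with $\|F_1-F_2\|_\infty = d > c/10$, and count the squares of $A^c$ in two different ways. For the lower bound, since each $F_i$ is far, every square of $V$ within Euclidean distance $c-2$ of $F_i$ lies in $A^c$. Setting $B_i = B(F_i,c-2)$ and $S=(B_1\cup B_2)\cap V$, we have $S\subseteq A^c$. Computing the union area using the standard lens formula for two disks of radius $c-2$ at centre-distance $d_E\ge d>c/10$, together with the bound $\arccos(x)\le\pi/2-x$, yields $|S|\ge \pi(c-2)^2 + (c-2)d_E - O(c) \ge \pi(c-2)^2 + c^2/10 - O(c)$; the $O(c)$ absorbs the discretisation loss between the continuous union area and the number of unit squares inside it.

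For the upper bound on $|A^c|$, decompose $A^c = N\sqcup W$, where $W$ is the union of all components of $\widehat G\setminus N$ other than $A$. The key observation is that any square of $V$ which is $\widehat G$-adjacent to $W$ but is not itself in $W$ must lie in $N$: it cannot lie in $A$ (this would put $W$ and $A$ in the same component of $\widehat G\setminus N$), and it cannot lie in another $W_k$ (this would merge two components of $\widehat G\setminus N$). Hence $(W^{+(c-2)}\cap V)\setminus W\subseteq N$, giving $|W^{+(c-2)}\setminus W|\le |N|\le U$. The Brunn--Minkowski inequality in the plane, applied to $W$ viewed as a union of unit squares, now gives
\[
|W^{+(c-2)}| \;\ge\; \bigl(|W|^{1/2}+\sqrt{\pi}\,(c-2)\bigr)^{2} \;=\; |W| + 2\sqrt{\pi|W|}\,(c-2) + \pi(c-2)^2,
\]
from which $2\sqrt{\pi|W|}\,(c-2)\le U-\pi(c-2)^2+O(c) = 8\pi c + O(c)$, so $|W|\le 16\pi + O(1) = O(1)$, and therefore $|A^c|\le U+O(1) = \pi(c-2)^2 + 8\pi c + O(1)$.

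Combining the two bounds gives $\pi(c-2)^2 + c^2/10 - O(c) \le \pi(c-2)^2 + 8\pi c + O(1)$, i.e.\ $c^2/10 \le 8\pi c + O(c)$, which fails for $c$ larger than some absolute constant ($c>80\pi$ suffices) and in particular for the paper's choice $c=1000$. This contradiction establishes the lemma. The one point needing technical care is the $O(c)$ discretisation error incurred when translating the continuous Brunn--Minkowski bound into a count of unit squares; since the main gap between the two bounds is of order $c^2\gg c$, however, these corrections are easily absorbed.
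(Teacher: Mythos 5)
Your argument is correct in outline but differs from the paper's route in an interesting way, and it has one real gap.

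\textbf{Comparison.} The paper avoids the two ingredients you work hardest for (the lens/area bookkeeping against $A^c$, and the Brunn--Minkowski bound on $W:=A^c\setminus N$) by a sharper observation: every square within $\widehat G$-distance $c-2$ of a far square is in $A^c$, hence cutoff, hence either \emph{itself far} or \emph{close}; and every close square lies in $N$. Thus the $(c-2)$-neighbourhood of the set $F$ of far squares, minus $F$ itself, is contained entirely in $N$. Taking the leftmost and rightmost far squares then yields a direct count of $N$-squares (two half-discs plus the inter-column band), giving a contradiction with $|N|\le U$ without any reference to $W$. You instead lower-bound $|A^c|$ and therefore must separately control $|W|$; your Brunn--Minkowski argument for $|W|=O(1)$ is a nice and correct way to do this (the key step, that $\widehat G$-neighbours of $W$ outside $W$ lie in $N$, is exactly right), but it is additional machinery that the paper's observation sidesteps. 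Both computations bottom out in the same arithmetic, $(c-2)\,d_E \gg 8\pi c$ for $c$ large.

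\textbf{The gap.} You never treat the case where $N$ is near the boundary of $S_n$. Your lower bound on $|S|=(B_1\cup B_2)\cap V$ tacitly assumes $B_1\cup B_2$ lies (up to $O(c)$) inside $V$; near an edge of $S_n$ as much as half of the union can be clipped away, and near a corner even more. Likewise, your step $\bigl|W^{+(c-2)}\setminus W\bigr|\le|N|$ silently replaces $\bigl(W^{+(c-2)}\cap V\bigr)\setminus W$ by $W^{+(c-2)}\setminus W$, which fails when $W$ is near the boundary. The paper handles this explicitly: it invokes the second and third parts of Lemma~\ref{l:non-full-size} (no non-full component of size $U/2$ within $c$ of an edge, no non-full square within $Uc$ of a corner) and re-runs the count with half the area against the bound $U/2$, and excludes corners outright. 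Your proof can be patched in the same spirit (half-disc lens plus a reflected Brunn--Minkowski near a straight edge, corners excluded), but as written this case is missing and the argument is incomplete.
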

\begin{remark*} This does not say whp since we are assuming this nonfull
component has size at most $U$.\vadjust{\goodbreak}
\end{remark*}
\begin{pf}
Suppose not.

Suppose, first, that no point of $N$ is within $c$ of the edge of
$S_n$, and that the two far squares are at horizontal distance at
least $c/10$. Then consider the left-most far square. All squares
which are to the left of this and with distance to this square less
than $(c-2)$ must be close and thus in $N$. Similarly with the
right-most far square. Also at least $(c-2)$ squares [in fact nearly
$2(c-2)$] in each of at least $c/10$ columns between the original two
far squares must be in $N$. This is a total of about $\pi(c-2)^2 +
(c-2)c/10>U$ which is a contradiction (provided we chose $c$
reasonably large).

If there is a point of $N$ within $c$ of the boundary, then the above
argument gives more than $U/2$ nonfull squares. Indeed, either it
gives half of each part of the above construction, or it gives all
of one end and all the side parts. This contradicts the second part
of our assumption about the size of nonfull components.

We do not need to consider a component near two sides: it cannot be
large enough to be near two sides. It also cannot go across a corner,
since no square within distance $Uc$ of a corner is nonfull.
\end{pf}

This result can also be deduced from a result
of Penrose, as we do in
the next section. We have the following instant corollary.
\begin{corollary}\label{c:far}
The graph $\widehat G$ restricted to the far squares is
complete.
\end{corollary}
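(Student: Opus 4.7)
The plan is to observe that this is an essentially immediate consequence of \Lm{far}, together with the choice of constants. Recall that in $\widehat G$ two squares are joined whenever their Euclidean distance is at most $(c-2)$.

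By \Lm{far}, any two far squares are within $\ell_\infty$ distance $c/10$ of each other. Converting to the Euclidean norm, their Euclidean distance is at most $\sqrt 2\cdot c/10$, which is certainly smaller than $(c-2)$ once $c$ is chosen reasonably large (for example, the value $c=1000$ already flagged in the tessellation step is ample). Therefore every pair of far squares is joined in $\widehat G$, so the induced subgraph on the far squares is complete. No additional probabilistic input is required, since \Lm{far} was already a deterministic statement given the standing assumption (from the end of Stage~2) that $N$ has size at most $U$, meets the boundary in a limited way, and does not approach a corner. The only possible obstacle is cosmetic, namely checking that the constants ($c=1000$, and the $c/10$ from \Lm{far}) fit together, which they do with considerable slack.
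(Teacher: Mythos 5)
Your argument is exactly what the paper has in mind: the corollary is stated with no separate proof precisely because Lemma~\ref{l:far} gives an $\ell_\infty$ bound of $c/10$ between any two far squares, which converts to a Euclidean bound of at most $\sqrt2\,c/10<c-2$, so all pairs are adjacent in $\widehat G$ by definition. Your constant check and the observation that no further probabilistic input is needed are both accurate, so the proposal matches the paper's (implicit) proof.
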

\begin{corollary}\label{c:cutoff}
The set of cutoff squares $A^c$ is contained in $N_c$ (the $c$-blow-up
of $N$). In particular,
the set $\Gamma(A^c)$ of neighbors in $\widehat G$ of $A^c$ is
contained in~$N_{2c}$.\vspace*{-2pt}
\end{corollary}
\begin{pf}
Suppose $A^c\not\subseteq N_c$.
Let $x$ be a square in $A^c\setminus N_c$. First, $x$
cannot be a neighbor of any square in $A$ or $x$ would also be in
$A$; that is, $x$ is a far square.

Now, let $y$ be any square with $\ell_\infty$ distance $c/5$ from $x$.
The square $y$ cannot be~in $N$ since then $x$ would be in
$N_c$. Therefore, $y$ cannot be a neighbor of any square in $A$ since
then it would be in $A$ and, since $x$ and $y$ are joined in $\widehat
G$, $x$ would be in $A$; that is, $y$ is also a far square. Hence, $x$
and $y$ are both far squares with $\ell_\infty$ distance $c/5$ which
contradicts Lemma~\ref{l:far}.\vspace*{-2pt}~%
\end{pf}

In particular, Corollary~\ref{c:cutoff} tells us that the sets of
squares cutoff by different nonfull components and all their
neighbors are disjoint (obviously the $2c$-blow-ups are disjoint).

We now formally define the \textit{sea} $\widetilde A=\bigcap_{N\in
\cN}A(N)$. We show later (Corollary~\ref{c:sea}) that $\widetilde A$
is connected and, thus, that this is the same as our earlier informal
definition. The following corollary is immediate from
Corollary~\ref{c:cutoff}.\vspace*{-2pt}
\begin{corollary}\label{c:local-sea}
For any $N\in\cN$ we have $\widetilde A\cap N_{2c}=A(N)\cap
N_{2c}$.\vspace*{-2pt}
\end{corollary}

The final preparation we need is the following lemma.\vspace*{-2pt}
\begin{lemma}\label{l:connected-boundary}
The set $N_{2c}\cap A$ is connected in $\widehat G$.\vspace*{-2pt}
\end{lemma}

Since the proof will be using a standard graph theoretic result, it is
convenient to define one more graph $\widehat G_1$: again the vertex
set is the set of small squares, but this time each square is joined
only to its four nearest neighbors; that is, $\widehat G_1$ is the
ordinary square lattice. We need two quick definitions. First, for a
set $E\in\widehat G_1$ we define the \textit{boundary} $\partial_1E$ of
$E$ to be set of vertices in $E^c$ that are neighbors (in~$\widehat
G_1$) of a vertex in $E$. Second, we say a set $E$ in $\widehat
G_1$ is \textit{diagonally connected} if it is connected when we add the
edges between squares which are diagonally adjacent (i.e., at distance
$\sqrt2$) to $\widehat G$. The lemma we need is the following; since
its proof is short we include it here for completeness. (It is also an
easy consequence of the unicoherence of the square (see, e.g., page 177
of~\cite{Penbook}).)\vspace*{-2pt}
\begin{lemma}\label{l:1-boundary}
Suppose that $E$ is any subset of $\widehat G_1$ with $E$ and $E^c$
connected. Then $\partial_1 E$ is diagonally connected: in
particular, it is connected in~$\widehat G$.\vspace*{-2pt}
\end{lemma}
\begin{pf}
Let $F$ be the set of edges of $\widehat G_1$ from $E$ to $E^c$, and
let $F'$ be the corresponding set of edges in the dual
lattice. Consider the set $F'$ as a subgraph of the dual lattice. It
is easy to check that every vertex has even degree except vertices
on the boundary of $\widehat G_1$. Thus we can decompose $F'$ into\vadjust{\goodbreak}
pieces, each of which is either a cycle or a path starting and
finishing at the edge of $\widehat G_1$. Any such cycle splits
$\widehat G_1$ into two components, and we see that one of these
must be exactly $E$ and the other $E^c$. Thus $F'$ is a single
component in the dual lattice, and it is easy to check that implies
that $\partial_1E$ is diagonally connected.\vspace*{-1pt}
\end{pf}
\begin{pf*}{Proof of Lemma~\ref{l:connected-boundary}}
Consider $\widehat G_1\setminus N_{2c}$. This splits into components
$B_1,B_2,\ldots, B_m$. By definition each $B_i$ is connected. Moreover,
each $B_i^c$ is also connected. Indeed, suppose $x,y\in B_i^c$. Then
there is an $xy$ path in $\widehat G_1$. If this is contained in $B_i^c$
we are done. If not then it must meet $N_{2c}$, but $N_{2c}$ is
connected. Hence we can take this path until it first meets $N_{2c}$,
go through $N_{2c}$ to the point where the path last leaves $N_{2c}$
and follow the path on to $y$. This gives a path in $B_i^c$.

Hence, by Lemma~\ref{l:1-boundary}, we see that each $\partial_1 B_i$
is connected in $\widehat G$ for each $i$ (where $\partial_1$ denotes the
boundary in $\widehat G_1$). Obviously $\partial_1 B_i\subset N_{2c}$.

As usual, for a set of vertices $V$ let $\widehat G[V]$ denote the
graph $\widehat G$ restricted to the vertices in $V$.\vspace*{-1pt}
\begin{claim*}
Any two vertices in $\bigcup_{i=1}^m \partial_1 B_i$ are connected in
$\widehat G[A\cap N_{2c}]$.\vspace*{-1pt}
\end{claim*}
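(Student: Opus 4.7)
The plan is to prove the claim in three stages: confirm $\partial_1 B_i \subset A \cap N_{2c}$ for every $i$; show each $\partial_1 B_i$ lies in a single connected component of $\widehat G[A \cap N_{2c}]$; and glue these components together. For the first stage, $\partial_1 B_i \subset N_{2c}$ is immediate since $B_i$ is a $\widehat G_1$-component of $\widehat G_1 \setminus N_{2c}$. If $x \in \partial_1 B_i$, pick a $\widehat G_1$-neighbour $y \in B_i$; then $y \notin N_{2c}$ rules out $x \in N$, since otherwise $y$ would lie in $N_1 \subset N_{2c}$. The proof of Corollary~\ref{c:cutoff} gives $A^c \subset N_c \subset N_{2c}$, so $B_i \subset A$, hence $y \in A$; since $x \notin N$ is $\widehat G$-adjacent to $y \in A$, we conclude $x \in A$, and thus $\partial_1 B_i \subset A \cap N_{2c}$.

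For the second stage, Lemma~\ref{l:1-boundary} gives that $\partial_1 B_i$ is diagonally connected in $\widehat G_1$. Diagonal edges span $\ell_\infty$-distance $1$, well within the $\widehat G$-edge range $c - 2$, so $\partial_1 B_i$ is connected in $\widehat G$ and the connecting walks stay inside $\partial_1 B_i \subset A \cap N_{2c}$.

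The third stage is the substantive one. Fix $x \in \partial_1 B_i$ and $y \in \partial_1 B_j$, both in $A$ by the first stage. Since $A$ is $\widehat G$-connected, take any $\widehat G$-path $P = (x_0, \ldots, x_\ell)$ from $x$ to $y$ in $\widehat G[A]$, and surgically replace each excursion into some $B_k$ by a detour through the $\widehat G$-connected set $\partial_1 B_k \subset A \cap N_{2c}$. The main obstacle is precisely this surgery: because a $\widehat G$-edge can jump up to $c - 2$ cells, the vertex $x_{a-1} \in A \cap N_{2c}$ preceding an entry into $B_k$ need not itself lie in $\partial_1 B_k$, and $P$ might even leap directly from one $B_k$ to another without visiting $A \cap N_{2c}$ in between. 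To close this gap we need an auxiliary geometric observation: any vertex $u \in A \cap N_{2c}$ which is $\widehat G$-adjacent to some $B_k$ can be joined inside $\widehat G[A \cap N_{2c}]$ to a vertex of $\partial_1 B_k$. We expect this to follow from the $\ell_\infty$-thickness of $N_{2c}$ (at least $2c$ around each point of $N$) comfortably exceeding the $\widehat G$-edge range $c - 2$, together with the fact that the cutoff squares $A^c$ all lie inside $N_c$.
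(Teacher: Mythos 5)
Your overall plan is sound and is in fact close in spirit to the paper's own argument: the paper also reduces the claim to the connectivity of $A$ in $\widehat G$. The difference is that the paper argues by contradiction rather than by direct surgery. It supposes the boundaries split into at least two classes not linked in $\widehat G[A\cap N_{2c}]$, with $\{\partial_1 B_1,\dots,\partial_1 B_k\}$ one maximal linked class; it picks $x\in B_1$, $y\in B_m$ (so $x,y\in A$ since $A^c\subset N_{2c}$), takes a path from $x$ to $y$ in $\widehat G[A]$, and looks at the last time this path leaves $\bigcup_{i\le k}B_i$; the segment of the path in $N_{2c}$ between that exit and the first subsequent entry into some $B_j$ with $j>k$ supplies a path in $A\cap N_{2c}$ linking $\bigcup_{i\le k}\partial_1 B_i$ to $\partial_1 B_j$, a contradiction. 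This needs to find only one bridge, not to repair every excursion.

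The problem with your write-up is that it stops exactly where the real work is. You correctly flag that a $\widehat G$-edge can skip over $N_{2c}$ or land at a vertex of $A\cap N_{2c}$ that is not in any $\partial_1 B_k$, and you then state an ``auxiliary geometric observation'' and say you ``expect'' it to follow from the thickness of $N_{2c}$. That expectation is not a proof, and the observation is genuinely non-trivial: taking a shortest $\widehat G_1$-path between two $\widehat G$-adjacent squares $u\in A\cap N_{2c}$ and $v\in B_k$, one can show every square on it lies in $A$ (being at $\ell_\infty$-distance more than $c+2$ from $N$, hence outside $N_c\supset A^c$), but the squares of this path lying in $N_{2c}$ need not form a single contiguous segment, so it is not immediate that $u$ reaches $\partial_1 B_k$ rather than $\partial_1 B_{k'}$ for some other $k'$. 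Moreover, even granting your auxiliary observation as stated, the surgery in your third stage has a further unaddressed difficulty: a single excursion into $\bigcup_k B_k$ may enter via $B_k$ and leave via $B_{k'}$ with $k\neq k'$ after jumping between several components, so joining the entry point to $\partial_1 B_k$ and the exit point to $\partial_1 B_{k'}$ does not by itself reconnect the path -- you would need to recurse inside the excursion. The paper's contradiction structure avoids this recursion because it only needs the first transition out of the class $\{B_1,\dots,B_k\}$. So the gap you identified is real, it is the crux, and it is not closed; you should either prove the auxiliary observation (and sort out the entry-$k$ versus exit-$k'$ issue) or switch to the paper's maximal-class contradiction, which is easier to make rigorous.
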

\begin{pf}
Suppose not. Without loss of generality assume that, for some $k<m$,
$\widehat G[\bigcup_{i=1}^k \partial_1B_i]$ is connected and that no
other $\partial_1B_i$ is connected via a path to it. Pick $x\in B_1$
and $y\in B_m$. Both $x$ and $y$ are in $A$ (since they are not in
$N_{2c}$ and $A^c\subset N_{2c}$ by Corollary~\ref{c:cutoff}).

Hence there is a path from $x$ to $y$ in $A$. Consider the last time
it leaves $\bigcup_{i=1}^k B_i$. The path then moves around in $N_{2c}$
before entering some $B_j$ with $j>k$. This gives rise to a path in
$A\cap N_{2c}$ from a point in $\bigcup_{i=1}^k\partial_1 B_i$ to a point
in $\partial_1 B_j$, contradicting the choice of
$k$.\vspace*{-1pt}
\end{pf}

We now complete the proof of Lemma~\ref{l:connected-boundary}. To
avoid clutter we shall say that two points are \textit{joined} if they
are connected by a path. Suppose that $x,y\in A\cap N_{2c}$. Since
$A$ is connected there is a path in $A$ from $x$ to $y$. If the path
is contained in $N_{2c}$ we are done. If not, consider the first time
the path leaves $N_{2c}$. It must enter one of the $B_i$, crossing the
boundary $\partial_1B_i$. Hence $x$ is joined to some $w\in
\partial_1B_i$ in $A\cap N_{2c}$. Similarly, by considering the last
time the path is not in $N_{2c}$ we see that $y$ is joined to some
$z\in\partial_1B_j$ for some~$j$. However, since the claim showed that
$w$ and $z$ are joined in $A\cap N_{2c}$, we see that $x$ and $y$ are
joined in $A\cap N_{2c}$.\vspace*{-1pt}
\end{pf*}
\begin{corollary}\label{c:sea}
The set of sea squares $\widetilde A$ is connected in $\widehat G$.\vspace*{-1pt}
\end{corollary}
\begin{pf}
Given two squares $x,y$ in $\widetilde A$, pick a path in $\widehat G$
from $x$ to $y$. Now for each nonfull component $N$ in turn do the
following. If the path misses $N_{2c}$ do nothing. Otherwise let $w$
be the first point on the path in $N_{2c}$ and $z$ be the last point
in $N_{2c}$. Replace the $xy$ path by the path $xw$, any path $wz$ in
$A(N)\cap N_{2c}$ and then the path $zy$.\vadjust{\goodbreak}

At each stage the modification ensured that the path now lies in
$A(N)$. Also, the only vertices added to the path are in $N_{2c}$
which is disjoint from all the previous~$N'_{2c}$, and thus from all
previous sets $A(N')$. Hence, when we have done this for all nonfull
components the path lies in every $A(N')$, that is, in $\widetilde
A$. Hence, $\widetilde A$ is connected.
\end{pf}

\subsection*{Stage 4: Dealing with the difficult subsquares}

We deal with each nonfull component $N\in\cN$ in turn. Fix one such
component $N$.

Let us deal with the far squares first. There are three possibilities:
the far squares contain no points at all, they contain one point in
total or they contain more than one point. In the first case, do
nothing and proceed to the next part of the argument.

In the second case, by the 2-connectivity of $G$, we can find two
vertex disjoint paths from this single vertex $v_1$ to points in
squares in $A$. In the third case pick two points $v_1$ and $v_2$ in
the far squares. Again by 2-connectivity we can find vertex disjoint
paths from these two vertices to points in squares in $A$.

Suppose that the path from $v_1$ meets $A$ in square $Q_1$ at point
$q_1$ and the other path (either from $v_2$ or the other path from
$v_1$ again) meets $A$ in square $Q_2$ at point~$q_2$. Let $P_1,P_2$
be the squares containing the previous points on these paths. Since
no two points in squares at (Euclidean) distance $(c+2)$ are joined we
see that $P_1$ is within $(c+2)$ of $Q_1$. Since $P_1\notin A$ we
have that some square on a shortest $P_1Q_1$ path in $\widehat G_1$ is
in $N$ and thus that $Q_1\in N_{2c}$. Similarly $Q_2\in
N_{2c}$. Combining we see that both $Q_1$ and $Q_2$ are in $N_{2c}\cap
A$. By Lemma~\ref{l:connected-boundary}, we know that $N_{2c}\cap A$
is connected in $\widehat G$ so we can find a path from $Q_1$ to $Q_2$
in $N_{2c}\cap A$ in~$\widehat G$. This ``lifts'' to a path in $G$
going from $q_2$ to a point other than $q_1$ in $Q_1$ using at most
one vertex in each subsquare on the way and never leaving $N_{2c}$.

Construct a path starting and finishing in $Q_1$ by joining together the
following paths:
\begin{enumerate}
\item the path from $q_1$ to $v_1$;
\item a path starting at $v_1$ going round all points in the far
region (except any such points on the $q_1v_1$ or $q_2v_2$ paths)
finishing back at $v_2$. (Corollary~\ref{c:far} guarantees the
existence of such a path.) We omit this piece if there is just one
far vertex;
\item the path $v_2$ to $q_2$;
\item the path from $q_2$ through the sea back to $Q_1$ constructed
above.
\end{enumerate}

Since $Q_1\in A\cap N_{2c}$, by Corollary~\ref{c:local-sea} we have
that $Q_1\in\widetilde A$. Combining, we have a path starting and
finishing in the same subsquare of the sea $\widetilde A$ (i.e.,
$Q_1$) containing all the vertices in the far region.

Next we deal with the close squares: we deal with each close square
$P$ in turn. Since $P$ is a close square we can pick $Q\in A$
with
$PQ$ joined in~$\widehat G$. In the following we ignore all points
that we have used in the path constructed above and any points
already used when dealing with other close squares.

If the square $P$ has no point in it we ignore it. If it has one point
in it, then join that point to two points in $Q$.

If it has two or more points in it then pick two of them $x,y$: and
pick two points $uv$ in $Q$ (we choose $M$ large enough to ensure that
we can find these two unused points in $Q$, see below).
Place the path formed by the edge $ux$ round all the remaining unused
vertices in the cutoff square finishing at $y$ and back to the square
$Q$ with the edge $yv$ in the cycle we are constructing.

The square $Q$ is a neighbor of $P\in A^c$ so, by
Corollary~\ref{c:cutoff} is in $N_{2c}$. Since $Q$ is also in $A$ we
see, by Corollary~\ref{c:local-sea} as above, that $Q\in\widetilde A$.

When we have completed this construction we have placed every vertex
in a cutoff square on one of a collection of paths, each of which
starts and finishes at the same square in the sea (although different
paths may start and finish in different squares in the sea).

We use at most $2U+2$ vertices from any square in $A=A(N)$ when doing
this, so, provided that $M>2U+2+(2c+1)^2$, there are at least
$(2c+1)^2$ unused vertices in each square of $A$ when we finish this.
Moreover, obviously the only squares touched by this construction are
in $N_{2c}$, and for distinct nonfull components these are all
disjoint. Hence, when we have done this for every nonfull component
$N\in\cN$ there are at least $(2c+1)^2$ unused vertices in each square
of the sea $\widetilde A$.

\subsection*{Stage 5: Using the subsquares in the sea to join everything
together}

It just remains to string everything together. This is easy. Since, by
Corollary~\ref{c:sea}, the sea of squares $\widetilde A$
is connected, there is a spanning tree for $\widetilde A$. By doubling
each edge we can think of this as a cycle, as in
Figure~\ref{fig:tree-gilbert}. This cycle visits each square at most
%
%
\begin{figure}[b]

\includegraphics{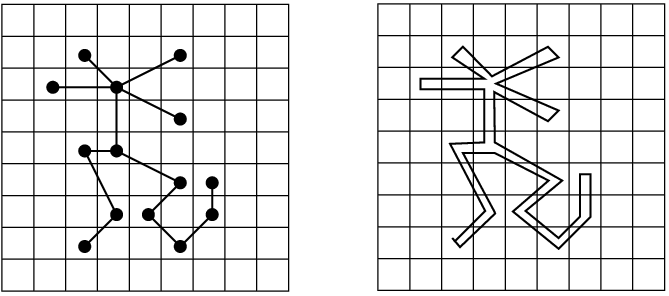}

\caption{A tree of subsquares and its corresponding tree cycle.}
\label{fig:tree-gilbert}
\end{figure}
$(2c+1)^2$ times. (In fact, by choosing a spanning tree such that the\vadjust{\goodbreak}
sum of the edge lengths is minimal we could assume that it visits each
vertex at most six times but we do not need this.) Convert this into a
Hamilton cycle as follows. Start at an unused vertex in a square of
the sea. Move to any (unused) vertex in the next square in the tree
cycle. Then, if this is the last time the tree cycle visits this
square, visit all remaining vertices and join in all the paths
constructed in the first part of the argument, then leave to the next
square in the tree cycle. If it is not the last time the tree cycle
visits this square, then move to any unused vertex in the next square
in the tree cycle. Repeat until we complete the tree cycle. Then join
in any unused vertices and paths to this square constructed earlier
before closing the cycle.

\section{Higher dimensions}
We generalise the proof in the previous section to higher dimensions
and any $p$-norm. Much of the argument is the same, in particular,
essentially all of stages four and five. We include details of all
differences but refer the reader to the previous section where the
proof is identical.

\subsection*{Stage 1: Tessellation}

We work in the $d$-dimensional hypercube $S_n^d$ of volume~$n$ (for
simplicity we will abbreviate hypercube to cube in the following). As
mentioned in the \hyperref[intro]{Introduction}, we no longer have a nice formula for
the critical radius: the boundary effects dominate.

Instead, we consider the expected number of isolated vertices
$E=E(r)$. We need a little notation: let $A_r$ denote the set
$\{x\in S^d_n\dvtx d(x,A)\le r\}$ and \mbox{$|\cdot|$} denote Lebesgue measure.

We have $E=\int_{S_n^d}\exp(-|\{x\}_r|)\,dx$. Let $r_0=r_0(n)$ be such
that $E(r_0)=1$. As before fix $c$ a large constant to be determined
later, and let $s=r_0/c$. It is easy to see that $r_0^d=\Theta(\log
n)$ and $s^d=\Theta(\log n)$. We tile the cube $S_n^d$ with small
cubes of side length $s$.

As before, let $r=\cH(G$ is 2-connected$)$. By Penrose (Theorems 1.1
and~1.2 of~\cite{Pen2} or Theorems 8.4 and 13.17 of~\cite{Penbook})
the probability that $r\notin[r_0(1-1/2c),r_0(1+1/2c)]$ tends to
zero and we ignore all these point sets. (Note that these two of
Penrose's results are not claimed for $p=1$. However, since for any
$\eps>0$ we
can pick $p>1$ such that $B_1(r)\subset B_{p}(r)\subset
B_1((1+\eps)r)$ [where $B_1(r)$ and $B_{p}$ denote the $l_1$ and
$l_{p}$ balls of radius $r$, resp.], the above bound on
$r$ for $p=1$ follows from Penrose's results for $p>1$.)

This time any two points in cubes at distance
$\frac{r-s\sqrt{d}}{s}\ge\frac{r_0-ds}s=c-d$ are joined, and no
points in cubes at distance $\frac{r+s\sqrt{d}}s\le
\frac{r_0+ds}s=c+d$ are joined.

\subsection*{Stage 2: The ``difficult'' subcubes}

Exactly as before we define nonfull cubes to be those containing
at most $M$ points, and we say two are joined if they have $\ell
_\infty$
distance at most $4c-1$.

We wish to prove a version of Lemma~\ref{l:non-full-size}. However, we
have several possible boundaries: for example, in three dimensions we
have the center, the faces, the edges and the corners. We call a\vadjust{\goodbreak}
nonfull component containing a cube $Q$ \textit{bad} if it consists of
at least $(1+1/c)|Q_{r_0}|/s^d$ cubes. (Note a component can be bad
for some cubes and not others.)
\begin{lemma}
The expected number of bad components tends to zero as $n$ tends to
infinity. In particular there are no bad components whp.
\end{lemma}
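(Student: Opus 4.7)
The plan is to follow the strategy of the proof of \Lm{non-full-size} in Stage~2, but with the position-dependent size threshold $k_0(Q) := \lceil (1+1/c)|Q_{r_0}|/s^d \rceil$. I would bound, for each fixed cube $Q$, the expected number of bad components containing $Q$, and then sum over $Q$, noting that every bad component is counted at least once (by the cube $Q$ witnessing its badness).

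In the graph on non-full cubes (two joined iff their $\ell_\infty$-distance is at most $4c-1$) every cube has at most $(8c)^d$ neighbours. By the standard bound used in the proof of \Lm{non-full-size}, the number of connected subsets of exactly $k$ cubes containing $Q$ is at most $e(8c)^{dk}$. Each cube is non-full with probability at most $2(s^d)^Me^{-s^d}/M!$, so writing $B = 2e(8c)^d(s^d)^M/M!$,
\[
\E\bigl[\#\text{size-}k\text{ components containing }Q\bigr] \le B^k e^{-k s^d}.
\]
Since $M$ is fixed and $s^d = \Theta(\log n)$, $B = (\log n)^{O(1)}$, so $B \le e^{s^d/(2c)}$ for all large $n$, uniformly in $Q$. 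Summing the resulting geometric series over $k \ge k_0(Q)$ and using $k_0(Q)s^d \ge (1+1/c)|Q_{r_0}|$ gives, for some $\delta = \delta(c) > 0$,
\[
\E\bigl[\#\text{bad components containing }Q\bigr] \le 2e^{-(1+1/c)(1-1/(2c))|Q_{r_0}|} \le 2e^{-(1+\delta)|Q_{r_0}|}.
\]

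To finish, sum over $Q$. For every $x \in Q$, $\{x\}_{r_0} \subset Q_{r_0}$, so $|\{x\}_{r_0}| \le |Q_{r_0}|$ and $s^d e^{-|Q_{r_0}|} \le \int_Q e^{-|\{x\}_{r_0}|}\,dx$; summing over all cubes and using the defining identity $E(r_0) = 1$,
\[
\sum_Q e^{-|Q_{r_0}|} \le \frac{1}{s^d}\int_{S_n^d} e^{-|\{x\}_{r_0}|}\,dx = \frac{1}{s^d} \to 0.
\]
Hence $\E[\#\text{bad}] \le 2\sum_Q e^{-(1+\delta)|Q_{r_0}|} \to 0$, and Markov concludes. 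The main obstacle is simply coordinating constants so that the polylogarithmic factor $B^{k_0(Q)}$ is absorbed by part of $e^{-k_0(Q)s^d}$ uniformly in $Q$; this works only because $s^d = \Theta(\log n)$ is much larger than $M\log\log n$ for fixed $M$. Boundary effects cause no trouble at this step because the exponent $|Q_{r_0}|$ is handled directly rather than through a uniform lower bound, and the sum $\sum_Q e^{-|Q_{r_0}|}$ is compared to the integral whose value is prescribed by the definition of $r_0$.
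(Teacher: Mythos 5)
Your proof is correct and follows essentially the same route as the paper's: bound the expected number of bad components through $Q$ by $(\text{polylog})^{k_0(Q)}e^{-k_0(Q)s^d}$ with the position-dependent threshold $k_0(Q)=\lceil(1+1/c)|Q_{r_0}|/s^d\rceil$, absorb the polylogarithmic factor using $s^d=\Theta(\log n)$, and then control $\sum_Q e^{-|Q_{r_0}|}$ by comparison with $\int_{S_n^d}e^{-|\{x\}_{r_0}|}\,dx=E(r_0)=1$. Your write-up is if anything a touch more careful than the paper's (explicit geometric-series sum over $k\ge k_0(Q)$ and the explicit $1/s^d$ normalising factor in the integral comparison, which the paper quietly folds into its $o(1)$).
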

\begin{pf}
The number of connected sets of size $U$ containing a
particular cube is at most $(e(8c)^d)^{U}$. The probability that a
cube is nonfull is at most $2s^{dM}e^{-s^d}/M!$. Since
$\min\{|Q_{r_0}|\dvtx\mathrm{cubes}\ Q\}=\Theta(\log n)$ and
$s^d=\Theta(\log n)$, the expected number of bad components is at most
\begin{eqnarray*}
&&\sum_{\mathrm{cubes}\
Q}\bigl(2s^{dM}e^{-s^d}(e(8c)^d)/M!\bigr)^{(1+1/c)|Q_{r_0}|/s^d}\\[-2pt]
&&\qquad=\sum_{\mathrm{cubes}\
Q}\bigl(2s^{dM}(e(8c)^d)/M!\bigr)^{(1+1/c)|Q_{r_0}|/s^d}\exp\bigl(-(1+1/c)|Q_{r_0}|\bigr)\\[-2pt]
&&\qquad=o(1)\sum_{\mathrm{cubes}\ Q}\exp(-|Q_{r_0}|)\\[-2pt]
&&\qquad\le o(1)\int_{S_n^d}\exp(-|\{x\}_{r_0}|)\,dx\\[-2pt]
&&\qquad=o(1) E(r_0)\\
&&\qquad=o(1).
\end{eqnarray*}
\upqed\end{pf}

(Again, note that this is true independently of $M$.)

From now on we assume that there is no bad component.

\subsection*{Stage 3: The structure of the difficult subcubes}

In this stage we will need one extra geometric result of Penrose, a~case of Proposition 5.15 of~\cite{Penbook} (see also Proposition 2.1
of~\cite{Pen2}).
\begin{proposition}\label{p:penrose}
Suppose $d$ is fixed and that \mbox{$\|\cdot\|$} is a $p$-norm for some $1\le
p\le\infty$. Then there exists $\eta>0$ such that if $F\subset O^d$
(the positive orthant in~$\R^d$) is compact with $\ell_\infty$
diameter at least $r/10$, and $x$ is a point of $F$ with minimal $l_1$
norm; then $|F_r|\ge|F|+|\{x\}_r|+\eta r^d$.
\end{proposition}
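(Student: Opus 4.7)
The plan is to combine the $\ell_1$-minimality of $x$ with the $\ell_\infty$-diameter bound to produce a ``surplus'' of volume inside $F_r$ beyond what is accounted for by $F\cup\{x\}_r$. First I would exploit the minimality: since $x$ has the smallest $\ell_1$-norm in $F\subset O^d$, the set $F$ lies in the closed half-space $H^+=\{z\in\R^d:\sum_i z_i\ge\sum_i x_i\}$, so the open half $V_-:=\{x\}_r\cap(\R^d\setminus H^+)$ is disjoint from $F$ while still contained in $\{x\}_r\subset F_r$. Reflection symmetry of the $p$-ball through $x$ gives $|V_-|=\tfrac12|\{x\}_r|$, contributing a free half-ball to $|F_r\setminus F|$.

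Next I would use the diameter hypothesis. The triangle inequality applied to the two points of $F$ realising the $\ell_\infty$-diameter yields $y\in F$ with $\|y-x\|_\infty\ge r/20$; a short argument using $\ell_1$-minimality to trade a large negative coordinate of $y-x$ for a positive one lets me assume $y_1-x_1\ge\beta r$ for an absolute constant $\beta>0$ depending only on $d$. The cap $C:=\{z\in\{y\}_r:z_1>x_1+r\}$ then lies in $F_r$, is disjoint from $\{x\}_r$ (because $z\in C$ forces $|z_1-x_1|>r$ and hence $\|z-x\|_p>r$), and has volume at least $\gamma_{p,d}\,r^d$: it is a cap of height $\ge\beta r$ cut from a $p$-ball of radius $r$, and such caps have $d$-dimensional volume of order $r^d$ uniformly in $1\le p\le\infty$.

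Combining by inclusion--exclusion, since $V_-\cap F=\emptyset$ and $C\cap\{x\}_r=\emptyset$,
\[
|F_r|\ge|F\cup\{x\}_r\cup C|=|F|+|\{x\}_r|+|C|-|F\cap V_+|-|F\cap C|,
\]
with $V_+=\{x\}_r\setminus V_-$. The target $|F_r|\ge|F|+|\{x\}_r|+\eta r^d$ therefore reduces to showing that the overlap loss $|F\cap V_+|+|F\cap C|$ is at most $|C|-\eta r^d$. I expect the main obstacle to be the regime where $F$ fills a substantial fraction of $V_+\cup C$; I would handle this by a case split on $|F|$. When $|F|\gtrsim r^d$, Brunn--Minkowski $|F_r|^{1/d}\ge|F|^{1/d}+|\{x\}_r|^{1/d}$ expands (for $d\ge 2$) to give an excess of order $r|F|^{(d-1)/d}=\Omega(r^d)$ over $|F|+|\{x\}_r|$, yielding the required $\eta r^d$ directly. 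When $|F|\lesssim r^d$ the overlap satisfies $|F\cap V_+|+|F\cap C|\le|F|$ and is itself bounded, so the direct construction of Steps~1--2 suffices for $\eta$ chosen below $\gamma_{p,d}$ minus the threshold constant. Uniformity in $p$ follows from continuous dependence of the cap volume on $p$.
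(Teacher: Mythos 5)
First, a remark on the comparison: the paper does not prove this proposition at all --- it quotes it directly as a case of Proposition~2.1 of~\cite{Pen2} (or Proposition~5.15 of~\cite{Penbook}) --- so your argument has to stand entirely on its own. The critical issue is the meaning of the volumes $|F_r|$ and $|\{x\}_r|$. You implicitly treat them as Lebesgue measures of the \emph{unrestricted} $r$-neighbourhoods in $\R^d$: that is what makes the point-reflection symmetry $|V_-|=\tfrac12|\{x\}_r|$ hold, and what lets you apply Brunn--Minkowski to the Minkowski sum $F+B_r$ (with $B_r$ the $p$-ball of radius $r$). But on that reading the hypotheses ``$F\subset O^d$'' and ``$x$ is $\ell_1$-minimal in $F$'' are superfluous ($|\{x\}_r|$ is the same for every $x$), and more to the point it is not the version the paper needs. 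In the proof of Lemma~\ref{l:far-high-dim} the proposition is invoked to lower-bound the \emph{number of close cubes} by $|F_{(c-2d)s}\setminus F|/s^d$, which is only valid if $F_{(c-2d)s}$ denotes the neighbourhood \emph{inside} $S_n^d$; likewise the quantity $(1+1/c)|Q_{r_0}|$ to which it is compared originates in the box-restricted integral $\int_{S_n^d}\exp(-|\{x\}_{r_0}|)\,dx$. So in the proposition $|F_r|$ and $|\{x\}_r|$ must be read as $|F_r\cap O^d|$ and $|\{x\}_r\cap O^d|$, which is also the form of Penrose's result.

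With the correct (octant-restricted) interpretation, Step~1 of your argument fails outright. The set $\{x\}_r\cap O^d$ is not symmetric under the reflection $z\mapsto 2x-z$, so $|V_-\cap O^d|$ need not be half of $|\{x\}_r\cap O^d|$; in the extremal --- and for the corner of $S_n^d$ most relevant --- case $x=0$, the half-space $H^+=\{z:\sum_iz_i\ge\sum_ix_i\}$ contains all of $O^d$, so $V_-\cap O^d=\emptyset$ and the ``free half-ball'' contributes nothing. Similarly, your cap $C\subset\{y\}_r$ need not lie in $O^d$, and Brunn--Minkowski controls $|F+B_r|$, not $|(F+B_r)\cap O^d|$, so the large-$|F|$ branch of your case split is not justified either. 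The whole content of the $\ell_1$-minimality and octant hypotheses is to quantify how the corner truncation of $\{x\}_r$ trades off against the surplus produced by the $\ell_\infty$-diameter of $F$; a correct proof has to work with the truncated volumes throughout, as Penrose does, and the unrestricted-volume scheme you propose does not transfer.
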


We begin this stage by proving Lemma~\ref{l:far} for this model.
\begin{lemma}\label{l:far-high-dim}
No two far cubes are more than $\ell_\infty$ distance $c/10$ apart.
\end{lemma}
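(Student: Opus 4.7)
The plan is to imitate the 2D proof of Lemma~\ref{l:far}, using Proposition~\ref{p:penrose} as a quantitative replacement for the explicit geometric counting carried out there. In 2D one directly produced more than $U$ non-full squares and contradicted Lemma~\ref{l:non-full-size}; in higher dimensions the bad-component threshold $(1+1/c)|Q_{r_0}|/s^d$ is position-dependent through $|Q_{r_0}|$, and Penrose's proposition provides exactly the position-aware volume surplus needed to close this up.

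Suppose for contradiction there are far cubes $Q_1,Q_2$ at $\ell_\infty$-distance greater than $c/10$, and let $F\subset\R^d$ be the union of all far cubes. Then $F$ has $\ell_\infty$-diameter greater than $cs/10=r_0/10$. As in the 2D case, we may translate coordinates so that the corner of $S_n^d$ nearest to $F$ is the origin and $F\subset O^d$. Let $r_1=(c-d-\sqrt d)s$, chosen slightly below the $\widehat G$-adjacency threshold $(c-d)s$ so that any cube whose centre lies in the $p$-norm blow-up $F_{r_1}$ is a $\widehat G$-neighbour of some far cube, and hence lies in $A^c$. With $x$ the $\ell_1$-minimal point of $F$, since the $\ell_\infty$-diameter of $F$ exceeds $r_0/10>r_1/10$, Proposition~\ref{p:penrose} applies and gives
\[
|F_{r_1}|\ \ge\ |F|+|\{x\}_{r_1}|+\eta r_1^d.
\]

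Every cube with centre in $F_{r_1}\setminus F$ is a non-far cube of $A^c$, and therefore is a close cube and lies in $N$. Passing from this Lebesgue estimate to a cube count costs only a surface correction of order $O(r_0^{d-1}s)$, so $|N|\,s^d\ge |\{x\}_{r_1}|+\eta r_1^d-o(r_0^d)$. The $\ell_1$-minimality of $x$ lets us pick a cube $Q\in N$ adjacent to the far cube containing $x$ on the side towards the origin, for which $|Q_{r_0}|$ agrees with $|\{x\}_{r_0}|$ up to $O(sr_0^{d-1})$. Combined with $(r_1/r_0)^d\ge 1-2d^2/c$, every multiplicative correction is $O(1/c)\cdot|\{x\}_{r_0}|=O(r_0^d/c)$, which for $c$ large is dominated by the fixed additive surplus $\eta r_1^d\ge\eta r_0^d/2$. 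This gives $|N|\ge(1+1/c)|Q_{r_0}|/s^d$, so $N$ is bad, contradicting the standing assumption.

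The main obstacle is the bookkeeping for two discretisation points: $r_1$ must be chosen slightly below $(c-d)s$ so that the $p$-norm blow-up is respected by the integer tessellation of cubes, and a suitable $Q\in N$ must be located whose $r_0$-neighbourhood matches the boundary geometry of $|\{x\}_{r_0}|$. The mildly delicate case is when $x$ lies very close to a corner of $S_n^d$ (where cubes of $N$ are forced to be at least $Uc$ away from that corner), but this contributes only a lower-order error since $Uc$ is a constant while $r_0\to\infty$. In every configuration the Penrose surplus $\eta r_0^d$ dominates all these losses, so taking $c$ sufficiently large closes the inequality.
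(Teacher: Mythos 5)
Your proposal follows the paper's strategy essentially step for step: translate coordinates so the nearest corner of $S_n^d$ is at the origin, identify $x$ as the $\ell_1$-minimal point of $F$, apply Proposition~\ref{p:penrose} to a radius just under the $\widehat G$-adjacency threshold, observe that the annulus is covered by close cubes of $N$, and conclude that $N$ is bad. The choice of radius $r_1=(c-d-\sqrt d)s$ plays the same role as the paper's $(c-2d)s$, and your replacement of the paper's $Q$ (the cube containing $x$, which is far and so not obviously in $N$) by an adjacent $Q\in N$ on the origin side is in fact a slightly more careful treatment of that point than the paper gives.

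There is, however, a genuine gap in the step ``Passing from this Lebesgue estimate to a cube count costs only a surface correction of order $O(r_0^{d-1}s)$.'' You count cubes whose \emph{centres} lie in $F_{r_1}\setminus F$, which forces you to control $\mathrm{vol}(F_{r_1}\setminus F)-s^d\,\#\{\text{such cubes}\}$ by the surface area of $\partial F_{r_1}$ times $s$. But at this stage in the argument you have no $O(r_0)$ bound on the diameter of $F$: the far cubes a priori range over all of $A^c$, and even using the no-bad-component hypothesis together with a vertex-isoperimetric bound, the best you can say is that $A^c$ has $\ell_\infty$-diameter $O(c^{1/(d-1)}r_0)$, which makes the surface area of $\partial F_{r_1}$ of order $c\,r_0^{d-1}$ and the correction of order $r_0^d$ --- the \emph{same} order as the Penrose surplus $\eta r_1^d$, so the inequality does not close by taking $c$ large. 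The paper sidesteps this entirely by counting, instead, cubes that \emph{intersect} $F_{(c-2d)s}\setminus F$: every point of that annulus lies in a cube that is $\widehat G$-adjacent to a far cube (for this choice of blow-up radius) and hence close, the close cubes tile a region containing the annulus, and so $\#\text{close cubes}\ge |F_{(c-2d)s}\setminus F|/s^d$ holds with no correction term and regardless of how spread out $F$ is. Switching your counting from ``centre in the blow-up'' to ``intersects the blow-up'' (with the margin in $r_1$ adjusted to absorb a half-diagonal on each side) would repair this and bring your proof into line with the paper's.

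One further small remark: the aside about $x$ near a corner of $S_n^d$ and the ``$Uc$ corner avoidance'' imports a 2D feature (Lemma~\ref{l:non-full-size}) that has no analogue in the higher-dimensional argument; there the position dependence is already absorbed into the definition of a bad component via $|Q_{r_0}|$, so no separate corner case is needed.
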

\begin{pf}
Suppose not. Then let $F$ be the set of far cubes, let $x$ be a
point of $F$ closest to a corner in the $l_1$ norm and let $Q$ be
the cube containing $x$ (or any of the possibilities if it is on the
boundary between cubes). We know that all the cubes
within $(c-d)$ of a far cube are not in $A$. Hence all such cubes
which are not far must be close, and thus nonfull.\vadjust{\goodbreak}

The number of close cubes is at least
\begin{eqnarray*}
\frac{|F_{(c-2d)s}\setminus F|}{s^d}
&\ge&\frac{|\{x\}_{(c-2d)s}|+\eta((c-2d)s)^d}{s^d} \qquad\mbox{by
Proposition~\ref{p:penrose}}\\
&\ge&\frac{|Q_{(c-3d)s}|+\eta r_0^d/2}{s^d}\qquad\mbox{provided $c$ is large
enough}\\
&=&\frac{|Q_{(1-3d/c)r_0}|+\eta r_0^d/2}{s^d}\\
&\ge&\frac{(1-3d/c)^d|Q_{r_0}|+\eta r^d_0/2}{s^d}\\
&>&\frac{(1+1/c)|Q_{r_0}|}{s^d}\qquad \mbox{provided $c$ is large enough.}
\end{eqnarray*}
This shows that the component
is bad which is a contradiction.
\end{pf}

Corollaries~\ref{c:far},~\ref{c:cutoff} and~\ref{c:local-sea} hold
exactly as before. Lemma~\ref{l:connected-boundary} also holds, we
just need to replace Lemma~\ref{l:1-boundary} by the following
higher-dimensional analogue. Note that, even in higher dimensions we
say two
squares are diagonally connected if their centers have distance
$\sqrt{2}$.
\begin{lemma}\label{l:1-boundary-hd}
Suppose that $E$ is any subset of $\widehat G_1$ with $E$ and $E^c$
connected. Then $\partial_1 E$ is diagonally connected: in
particular, it is connected in~$\widehat G$.
\end{lemma}
\begin{remark*}
Again the final conclusion of connectivity in $\widehat G$ is
an easy consequence of unicoherence, this time of the
hypercube.
\end{remark*}
\begin{pf}
Let $I$ be a (diagonally connected) component of $\partial_1 E$. We
aim to show the $I=\partial_1E$ and, thus, that $\partial_1 E$ is
diagonally connected.
\begin{claim*}
Suppose that $C$ is any circuit in $\widehat G_1$. Then the number of
edges of $C$ with one end in $E$ and the other end in $I$ is even.
\end{claim*}
\begin{pf}
We say that a circuit is contractible to a single point using the
following operations. First, we can remove an out and back
edge. Second, we can do the following two-dimensional move. Suppose
that two consecutive edges of the circuit form two sides of a square;
then we can replace them by the other two sides of the square keeping
the rest of the circuit the same. For example, we can replace
$(x,y+1,\vec z)\to(x+1,y+1, \vec z)\to(x+1,y,\vec z)$ in the circuit
by $(x,y+1,\vec z)\to(x,y,\vec z)\to(x+1,y,\vec z)$.

Next we show that $C$ is contractible. Let $w(C)$ denote the weight of
the circuit: that is, the sum of all the coordinates of all the
vertices in $C$. We show that, if $C$ is nontrivial, we can apply one
of the above operations and reduce $w$. Indeed,\vadjust{\goodbreak} let $v$ be a vertex on
$C$ with maximal coordinate sum, and suppose that $v_-$ and $v_+$ are
the vertices before and after $v$ on the circuit. If $v_-=v_+$ then we
can apply the first operation removing $v$ and $v_+$ from the circuit
which obviously reduces $w$. If not, then both $v_-$ and $v_+$ have
strictly smaller coordinate sums than $v$, and we can apply the second
operation reducing $w$ by two. We repeat the above until we reach the
trivial circuit.

Now, let $J$ be the number of edges of $C$ with an end in each
of $E$ and~$I$. The first operation obviously does not change the
parity of $J$. A~simple finite check yields the same for the second
operation. Indeed, assume that we are changing the path from
$(x,y+1),(x+1,y+1),(x+1,y)$ to $(x,y+1),(x,y),(x+1,y)$. Let $F$ be the
set of these four vertices. If no vertex of $I$ is in $F$, then
obviously $J$ does not change. If there is a vertex of $I$ in $F$,
then, by the definition of diagonally connected, $F\cap I=F\cap
\partial_1 E$. Hence the parity of $J$ does not change. [It is even if
$(x,y+1)$ and $(x+1,y)$ are both in $E$ or both in $E^c$ and odd
otherwise.]\vspace*{-2pt}
\end{pf}

Suppose that there is some vertex $v \in\partial_1 E\setminus I$ and
that $u\in E$ is a neighbor of~$v$. Let $y\in I$ and $x\in E$ be
neighbors. Since $E$ and $E^c$ are connected we can find paths
$P_{xu}$ and $P_{vy}$ in $E$ and $E^c$, respectively. The circuit
$P_{xu},uv,P_{vy},yx$ contains a single edge from $E$ to $I$ which
contradicts the claim.\vspace*{-2pt}
\end{pf}

To complete this stage observe that Corollary~\ref{c:sea} holds as
before.\vspace*{-2pt}

\subsection*{Stage 4: Dealing with the difficult subcubes, and Stage 5:
Using the subcubes in the sea to join everything together}

These two stages\vspace*{1pt} go through exactly as before [with one trivial
change: replace $(2c+1)^2$ by $(2c+1)^d$].
This completes the proof of Theorem~\ref{t:multi-dim-gilbert}.\vspace*{-2pt}

\section{\texorpdfstring{Proof of Theorem \protect\ref{t:knear}}{Proof of Theorem 3}}
In this section we prove Theorem~\ref{t:knear}. Once again, the proof
is very similar to that in Section~\ref{s:gilbert-2d}. We shall
outline the key differences, and emphasise why we are only able to
prove the weaker version of the result.\vspace*{-2pt}

\subsection*{Stage 1: Tessellation}

The tessellation is similar to before, but this time some edges may be
much longer than some nonedges.

Let $k=\cH(G$ is $\kappa$-connected) be the smallest $k$ that
$G_{n,k}$ is $\kappa$-connected. Since $G$ is connected we may assume
that $0.3\log n<k<0.52\log n$ (see~\cite{BBSW1} and~\cite{BBSW2}). Let
$r_-$ be such
that any two points at distance $r_-$ are joined whp; for example,
Lemma 8
of~\cite{BBSW1} implies that this is true provided $\pi r_-^2\le
0.3e^{-1-1/0.3}\log n$, so we can take $r_-= 0.035\sqrt{\log n}$.

Let $r_+$ be such that no edge in the graph has length more than
$r_+$. Then, again by Lemma 8 of~\cite{BBSW1}, we have
\[
\pi r_+^2\le4e(1+0.52)\log n
\]
whp, so we can take $r_+=2.3\sqrt{\log n}\le66r_-$.

From here on, we ignore all point sets with an edge longer than $r_+$
or a nonedge shorter than $r_-$.\vadjust{\goodbreak}

Let $s=r_-/\sqrt8$. We tessellate the box $S_n$ with small squares
of side length~$s$. (Since we are proving only this weaker result our
tesselation does not need to be very fine.) By the choice of $s$ and
the bound on $r_-$ any two points in neighboring or diagonally
neighbouring squares are joined in $G$. Also, by the bound on $r_+$ no
two points in squares with centers at distance more than $(66 \sqrt
5+2)s<150s$ are joined. Let $D=10^4$; we have that no two points in
squares with centers distance $Ds$ apart are joined.\vspace*{-2pt}

\subsection*{Stage 2: The ``difficult'' subsquares}

We call a square \textit{full} if it contains at least $M=10^9$ points
and \textit{nonfull} otherwise. We say two nonfull squares are joined
if they are at $\ell_\infty$ distance at most $2D-1$.

First we bound the size of the largest component of nonfull squares.\vspace*{-2pt}
\begin{lemma}\label{l:non-full-knear}
The largest component of nonfull squares has size less than 7000 whp.\vspace*{-2pt}
\end{lemma}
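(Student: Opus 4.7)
The plan is to run the same first-moment argument as in Lemma~\ref{l:non-full-size}, verifying that the numerical constants still close. Fix $U=7000$; I will show that the expected number of non-full components of size at least $U$ tends to $0$ and then appeal to Markov's inequality.

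For the combinatorics, two non-full squares are joined when their $\ell_\infty$ distance is at most $2D-1$, so the maximum degree in the non-full adjacency graph is at most $(4D-1)^2 < 16D^2$. Thus, by the standard enumeration of connected subgraphs (Problem 45 of \cite{CTM}, as used earlier), the number of potential components of size $U$ containing a given square is at most $e(16D^2)^{U}$, and there are at most $n$ choices of starting square. For the probability estimate, the expected number of points in a single small square is $s^2 = r_-^2/8 \ge (0.035)^2 (\log n)/8 = c_0 \log n$ with $c_0 \ge 1.53\cdot 10^{-4}$. Since the $U$ distinct tessellation squares are disjoint, the Poisson counts on them are independent, so the probability that a specific set of $U$ squares are all non-full factorises. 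Each factor is a tail $\Prb(\mathrm{Poisson}(s^2)<M)$, which for $n$ large (so that $s^2 \gg 2M$) satisfies
\[
\Prb(\mathrm{Poisson}(s^2)<M) \le \frac{2s^{2M}e^{-s^2}}{M!},
\]
exactly as in the Gilbert proof (the geometric tail bound on the Poisson series).

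Combining,
\[
\E[\#\text{components of size $U$}] \le n\left(\frac{2e(16D^2)\,s^{2M}e^{-s^2}}{M!}\right)^{U}
= \exp\Bigl(\log n - Us^2 + O(UM\log\log n)+O(U)\Bigr).
\]
Since $M,U,D$ are fixed constants and $s^2=\Theta(\log n)$, the error term $O(UM\log\log n)$ is $o(\log n)$. The key inequality is $Us^2 > \log n$ with positive slack, and the constants were chosen so this holds: $Us^2 \ge 7000 \cdot 1.53\cdot 10^{-4}\log n \ge 1.07\log n$. Thus the expected count is at most $\exp(-0.07\log n + o(\log n))\to 0$, and the lemma follows by Markov.

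The main thing to get right is the margin in this final numerical inequality: the constant $U=7000$ is fine-tuned to beat the factor $n = e^{\log n}$ coming from summing over starting squares, using only the rather weak Chernoff bound $r_-^2 \ge 1.22\cdot 10^{-3}\log n$ from \cite{BBSW1}. The bound for $\Prb(\text{non-full})$ contributes an $s^{2MU}=(\log n)^{O(1)}$ factor, but since $M$ and $U$ are fixed this is polylogarithmic and easily absorbed into the $o(\log n)$ slack. I do not expect any genuine obstacle beyond carefully choosing $c_0$, $M$, $D$, and $U$ to be mutually compatible; in particular, unlike the Gilbert case, no boundary or corner analysis is needed because the lemma only asserts a uniform size bound.
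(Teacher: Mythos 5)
Your proposal is correct and is essentially the same argument as the paper's: enumerate connected subsets via the $e\Delta^U$ bound, multiply by the independent per-square Poisson tail $2s^{2M}e^{-s^2}/M!$, and observe that $7000\cdot(0.035)^2/8>1.07>1$ kills the $n$ factor while the polylogarithmic $s^{2MU}$ and constant $(16D^2)^U$ factors are absorbed into the slack. The only differences are cosmetic (presenting the bound in $\exp(\log n - Us^2 + o(\log n))$ form rather than as an explicit product).
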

\begin{pf}
The number of connected subgraphs of $\widehat G$ of size $7000$
containing a particular square is at most $(e(4D)^2)^{7000}$, so, since
there are less than $n$ squares, the total number of such connected
subgraphs is at most $n(e(4D)^2)^{7000}$. The
probability that a square is nonfull is at most
$2s^{2M}e^{-s^2}/M!$. Hence, the expected number of components of
nonfull squares of size at least 7000 is at most
\begin{eqnarray*}
&&n\bigl(2s^{2M}e^{-s^2}(e(4D)^2)/M!\bigr)^{7000}\\
&&\qquad\le n\biggl(2\biggl(\frac{(0.035)^2\log
n}{8}\biggr)^M\frac{e(4D)^2}{M!}\biggr)^{7000}\exp\biggl(\frac{-7000(0.035)^2\log
n}{8}\biggr),
\end{eqnarray*}
which tends to zero as $n$ tends to infinity [since
$7000(-0.035)^2/8>1.07>1$]; that is, whp, no such component exists.\vspace*{-2pt}
\end{pf}

In the rest of the argument we shall assume that there is no nonfull
component of size greater than 7000.\vspace*{-2pt}

\subsection*{Stage 3: The structure of the difficult subsquares}

As usual we fix one component $N$ of the nonfull squares, and suppose
that it has size $u$ (so we know $u<7000$). This time we define
$\widehat
G$ to be the graph on the small squares where each square is joined to
its eight nearest neighbors (i.e., adjacent and diagonal). Let $A=A(N)$
be the giant component of $G\setminus N$, and again split the cutoff
squares into close and far depending whether they have a neighbor (in
$\widehat G$) in $A$.

%
\begin{figure}

\includegraphics{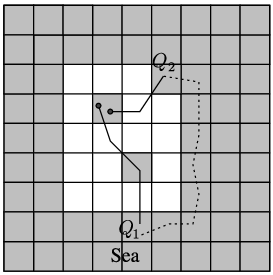}

\caption{Two paths from one cutoff square to the sea together with the
path from the meeting point in $Q_2$ to the square $Q_1$.}
\label{fig:full2}
\end{figure}

By the vertex isoperimetric inequality in the square there are at most
$u^2/2$ squares in $A^c\setminus N$ so $|A^c|\le u^2/2+u<2.5\cdot10^7$.

Next we prove a result similar to Corollary~\ref{c:cutoff}.\vspace*{-2pt}
\begin{lemma}
The set of cutoff squares $A^c$ is in $N_{D}$ (where $D=10^4$ as
above).\vadjust{\goodbreak}
\end{lemma}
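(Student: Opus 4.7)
The plan is to reduce the claim to a statement about far squares and then apply a one-shot isoperimetric argument. As in the earlier sections, every close square lies in $N$, because a cutoff square with a $\widehat G$-neighbour in $A$ would itself belong to the giant component $A$. Hence $A^c=N\cup F$, where $F$ denotes the set of far squares, and since $N\subseteq N_D$ trivially, it suffices to prove that every $x\in F$ satisfies $d_\infty(x,N)<D$.

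Fix $x\in F$ and write $k=d_\infty(x,N)$, so that the $\ell_\infty$-ball $B=B_\infty(x,k-1)$ meets no square of $N$. Let $C$ be the $\widehat G$-component of $F$ containing $x$. The key observation is that $B\cap S_n\subseteq C$. Indeed, take $y\in C\cap B$ and any $\widehat G$-neighbour $z$ of $y$ with $z\in B\cap S_n$: since $y$ is far, $z\notin A$; since $B$ avoids $N$, $z\notin N$; and $A^c=N\cup F$, so $z\in F$, and hence $z\in C$ by connectedness. Because $B\cap S_n$ is $\widehat G$-connected (the diagonal edges make any axis-aligned box intersected with $S_n$ connected) and contains $x\in C$, the whole of $B\cap S_n$ lies in $C$.

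To conclude, observe that even if $x$ sits at a corner of $S_n$ we have $|B\cap S_n|\ge k^2$, while the vertex isoperimetric bound recalled just before the lemma gives $|F|\le|A^c\setminus N|\le u^2/2<\tfrac12\cdot 7000^2$. Combining these two inequalities yields $k^2\le u^2/2$, so $k<5000<D$, as required. There is no real obstacle: the content of the argument is simply that a far square cannot escape from an $N$-free ball, so such a ball is entirely filled with far squares, and then the isoperimetric bound on $|F|$ caps its radius well below $D$.
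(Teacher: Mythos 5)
Your argument is essentially the paper's argument, reorganised slightly. The paper takes $Q\in A^c\setminus N_D$, shows the $\ell_\infty$-ball of radius $D$ around $Q$ lies entirely in $A^c$ (any square of $A$ inside that ball could be reached from $Q$ by a monotone lattice path avoiding $N$, forcing $Q\in A$), and derives the contradiction $|A^c|>D^2=10^8>2.5\cdot 10^7$. You run the same isoperimetric contradiction, but organise it by bounding $d_\infty(x,N)$ directly, and you use the definition of a far square (no $\widehat G$-neighbour in $A$) to spread through the ball instead of the paper's path argument; the two are logically equivalent since any $Q\in A^c\setminus N_D$ is automatically far (it is not in $N$, so it cannot be close). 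Your version yields the marginally stronger conclusion $A^c\subseteq N_{5000}$, but this is not needed.

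One small imprecision: the chain $|B\cap S_n|\le|C|\le|F|\le|A^c\setminus N|$ uses $F\subseteq A^c\setminus N$, which need not hold --- a far square can perfectly well lie in $N$ (for instance the centre of a $3\times 3$ block of non-full squares has all eight $\widehat G$-neighbours in $N$ and hence none in $A$). However, your argument does not actually require this: since $B$ was chosen to avoid $N$, you already know $B\cap S_n\subseteq F\setminus N\subseteq A^c\setminus N$ directly, and that is all the inequality $k^2\le u^2/2$ needs. So the proof is correct once the gratuitous intermediate inequality is dropped.
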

\begin{pf}
Suppose not, and that $Q$ is a square in $A^c$ not in $N_{D}$. Then
all squares within $\ell_\infty$ distance of $Q$ at most $D$ are not
in $N$. Hence they must be in $A^c$ (since otherwise there would be
a path from $Q$ to a square in $A$ not going through any square in
$N$). Hence $|A^c|>D^2=10^8$ which
contradicts Lemma~\ref{l:non-full-knear}.
\end{pf}

Finally, we need the analogue of Lemma~\ref{l:connected-boundary}
whose proof is exactly the same.
\begin{lemma}\label{l:connected-boundary-knear}
The set $N_{D}\cap A$ is connected in $\widehat G$.
\end{lemma}

\subsection*{Stage 4: Dealing with the difficult subsquares}

Let us deal with these cutoff squares now. From each cutoff square
that contains at least two vertices, pick any $2$ vertices, and from
each cutoff square that contains a single vertex pick that vertex with
multiplicity two. We have picked at most $5\cdot10^7$ vertices, so
since $G$ is $\kappa=5\cdot10^7$ connected we can simultaneously find
vertex disjoint paths from each of our picked vertices to vertices in
squares in $A$ (two paths from those vertices that are repeated).

We remark that these are not just single edges; these paths may go
through other cutoff squares.

Call the first point of such a path which is in $A$ a \textit{meeting
point}, and the square containing this point a \textit{meeting} square.

Fix a cutoff square and let $v_1,v_2$ be the two vertices picked above
from this square (let $v_1=v_2$ if the square only contains one
vertex). This cutoff square has two meeting points, say $q_1$ and $q_2$
in subsquares $Q_1$ and $Q_2$, respectively. Since the longest edge is
at\vspace*{1pt} most $r_+$, both $Q_1$ and $Q_2$ are in $N_D$. Since $A\cap N_D$ is
connected in $\widehat G$ we construct a path in the squares in $A\cap
N_D$ from the meeting point in $Q_2$ to a vertex in $Q_1$ using at
most one vertex in each subsquare on the way, and missing all the
other meeting points. This is possible since each full square contains
at least $M=10^9$ vertices.\looseness=-1\vadjust{\goodbreak}

Construct a path starting and finishing in $Q_1$ containing all the
(unused) vertices in this cutoff square by joining together the
following paths:
\begin{enumerate}
\item the path from $q_1$ to $v_1$;
\item a path starting at $v_1$ going round all points in the cutoff
square finishing back at $v_2$ (omit this piece if there is just one
far vertex);
\item the path $v_2$ to $q_2$;
\item the path from $q_2$ through $A\cap N_D$ back to $Q_1$
constructed above.
\end{enumerate}

Do this for every cutoff square. For each cutoff square this
construction uses at most two vertices from any square in $A$.
Moreover, it obviously only touches squares in $N_D$. Since nonfull
squares in distinct components are at distance at least $2D$ the
squares touched by different nonfull components are distinct. Thus
in total we have used at most $4\cdot10^7$ vertices in any square in
the sea, and since $M=10^9$ there are many (we shall only need 8)
unused vertices left in each full square in the sea.\vspace*{-2pt}

\subsection*{Stage 5: Using the subsquares in the sea to join everything
together}

This is exactly the same as before.\vspace*{-2pt}

\section{Comments on the $k$-nearest neighbor proof}

We start by giving some reasons why the proof in the $k$-nearest
neighbor model only yields the weaker Theorem~\ref{t:knear}. The
first superficial problem is that we use squares in the tesselation
which are of ``large'' size rather than relatively small as in the
proof of Theorem~\ref{t:2d-gilbert}, (in other words we did not
introduce the constant $c$ when setting $s$ depending on~$r$).

Obviously we could have introduced this constant. The difficulty when
trying to mimic the proof of Theorem~\ref{t:2d-gilbert} is the large
difference between $r_-$ and $r_+$, which corresponds to having a very
large number of squares (many times $\pi c^2$) in our nonfull
component $N$. This means that we cannot easily prove anything similar
to Lemma~\ref{l:far}. Indeed, a~priori, we could have two far squares
with $\pi c^2$ nonfull squares around each of them.

A different way of viewing this difficulty is that, in the $k$-nearest
neighbor model, the graph $\widehat G$ on the small squares does not
approximate the real graph $G$ very well, whereas in the Gilbert model
it is a good approximation. Thus, it is not surprising that we only
prove a weaker result.

This is typical of results about the $k$-nearest neighbor model; the
results tend to be weaker than for the Gilbert model. This is
primarily because the obstructions tend to be more complex; for
example, the obstruction for connectivity in the Gilbert model is the
existence of an isolated vertex. Obviously in the $k$-nearest
neighbor model we never have an isolated vertex; the obstruction must
have at least $k+1$ vertices.\vspace*{-2pt}

\subsection*{Extensions of Theorem \protect\ref{t:knear}}
When proving Theorem~\ref{t:knear} we only used two facts about the
random geometric graph. First, that any two points at distance
$r_-=0.035\sqrt{\log n}$ are joined whp. Secondly,\vadjust{\goodbreak} that the ratio of
$r_+$ (the longest edge) to $r_-$ (the shortest nonedge) was at most
60 whp. Obviously, we could prove the theorem (with different
constants) in any graph with $r_-=\Theta(\sqrt{\log n})$ and $r_+/r_-$
bounded. This includes higher dimensions and different norms and to
different shaped regions instead of $S_n$ (e.g., to disks or
toruses). Indeed, the only place we used the norm was in obtaining the
bounds on $r_+$ and $r_-$ in stage 1 of the proof.

Indeed, it also generalizes to irregular distributions of vertices
provided that the above bounds on $r_-$ and $r_+$ hold. For example,
it holds in the square $S_n$ where the~density of points in the
Poisson Process decrease linearly from 10 to 1 across the square.\vspace*{-2pt}

\section{Closing remarks and open questions}

A related model where the result does not seem to follow easily from
our methods is the directed version of the $k$-nearest neighbor
graph. As mentioned above, the $k$-nearest neighbor model naturally
gives rise to a directed graph, and we can ask whether this has a
directed Hamilton cycle. Note that this directed model is
significantly different from the undirected. For example, it is likely
(see~\cite{BBSW1}) that the obstruction to directed connectivity
(i.e., the existence of a directed path between any two vertices) is a
single vertex with in-degree zero; obviously this cannot occur in the
undirected case where every vertex has degree at least $k$. In some
other random graph models a sufficient condition for the existence of
a Hamilton cycle (whp) is that there are no vertices of in-degree or
out-degree zero. Of course, in the directed $k$-nearest neighbor
model every vertex has out-degree $k$ so we ask the following
question.\vspace*{-2pt}
\begin{question*}
Let $\vec G=\vec G_{n,k}$ be the directed $k$-nearest neighbor model. Is
\[
\cH(\mbox{$\vec G$ has a Hamilton cycle})=
\cH(\mbox{$\vec G$ has no vertex of in-degree zero})
\]
whp?\vspace*{-2pt}
\end{question*}

It is obvious that the bound on connectivity in the $k$-nearest
neighbor model can be improved, but the key question is ``should it
be two?'' We make the following natural conjecture:\vspace*{-2pt}
\begin{conjecture*}
Suppose that $k=k(n)$ such that the $k$-nearest neighbor graph
$G=G(k,n)$ is a $2$-connected whp. Then, whp, $G$ has a Hamilton
cycle.\vspace*{-2pt}
\end{conjecture*}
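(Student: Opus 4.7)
The plan is to mimic, as closely as possible, the tight argument of Section~\ref{s:gilbert-2d} (which handled the Gilbert case with exactly $2$-connectivity), rather than the looser argument of the previous section (which needed $\kappa = 5\cdot 10^7$). First, I would set $k = \cH(G\text{ is 2-connected})$ and tessellate $S_n$ with small squares of side $s = r_-/c$ for a large constant $c$, where $r_-$ is the ``definitely-edge'' radius appropriate to this value of $k$ (obtainable via sharp-threshold machinery like \cite{BBSW3}). One would then prove a bound on the size of non-full components---ideally of the form $U = O(c^2)$---using a first-moment argument as in Lemma~\ref{l:non-full-size}.

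The crucial step, which is the main obstacle, is an analogue of Lemma~\ref{l:far}: the far squares of each non-full component lie within $\ell_\infty$ distance $c/10$ of each other. In the Gilbert model this worked because $r_+/r_-\to 1$, so $c$ could be chosen independently of this ratio. In the $k$-nearest neighbour model, as noted in Section~4, the ratio $r_+/r_-$ remains bounded away from $1$ (by a factor of around $60$), and two widely-separated far squares cannot be ruled out by a na\"ive area-counting argument. To handle this, one would need to exploit a sharper local structure of $k$-nearest neighbour edges: in typical ``bulk'' regions the edge lengths concentrate around the local value of $r_-$, and only genuinely anomalous sparse regions produce edges as long as $r_+$. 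The hope is that a non-full component appearing near the $2$-connectivity threshold must be a \emph{single} such anomalous region, so that its far squares are automatically concentrated.

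If the strengthened far-squares lemma is available, Corollaries~\ref{c:far},~\ref{c:cutoff},~\ref{c:sea} and Lemma~\ref{l:connected-boundary} go through essentially verbatim. Stage~4 then works with just $2$-connectivity: for each non-full component one picks one or two vertices in the far region and uses vertex-disjoint paths into the sea, exactly as in the Gilbert proof. Stage~5 (stringing the pieces together through the sea) is then unchanged.

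The main obstacle is therefore the sharp localisation of non-full components near the true $2$-connectivity threshold. This seems to require a substantial improvement over the current understanding of the $k$-nearest neighbour model at threshold, paralleling the sort of local analysis Penrose carried out in \cite{Pen2} for the Gilbert model. The tools of \cite{BBSW3} may not suffice; a new local description of the obstructions to $2$-connectivity, tailored to the $k$-nearest neighbour setting, is probably needed.
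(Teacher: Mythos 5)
The statement you were asked to prove is not proved in the paper: it is posed there explicitly as an open conjecture, in the closing section, and the authors themselves only establish the much weaker Theorem~\ref{t:knear} with $\kappa = 5\cdot 10^7$. So there is no ``paper's own proof'' to compare against, and your write-up is not a proof either --- it is an honest sketch of a programme with an acknowledged unresolved core.

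Your diagnosis of the obstruction is accurate and in fact closely mirrors what the authors say themselves in Section~5 (``Comments on the $k$-nearest neighbour proof''): the Gilbert-model argument hinges on Lemma~\ref{l:far}, which needs $r_+/r_-$ close to $1$ so that the $\ell_\infty$ spread of far squares can be bounded by a counting argument against $U$; in the $k$-nearest neighbour model $r_+/r_-$ is bounded away from $1$ (by a factor of roughly $60$), the non-full component can \emph{a priori} have many more squares, and the far-squares localisation fails. You also correctly observe that if a far-squares lemma were available, Corollaries~\ref{c:far},~\ref{c:cutoff},~\ref{c:sea}, Lemma~\ref{l:connected-boundary}, and Stages~4--5 would go through with $2$-connectivity alone. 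All of this is consistent with the paper's own commentary.

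The genuine gap is exactly where you flag it. The claim that ``a non-full component appearing near the $2$-connectivity threshold must be a single anomalous region, so that its far squares are automatically concentrated'' is a hope, not an argument, and it is precisely the missing ingredient. Nothing in \cite{BBSW1}, \cite{BBSW2} or \cite{BBSW3}, as cited, gives a local structural description of obstructions to $2$-connectivity in the $k$-nearest neighbour model sharp enough to rule out, say, two widely separated sparse pockets each surrounded by its own ring of non-full squares, joined by a chain of non-full squares. Until that localisation is proven, Lemma~\ref{l:far} has no analogue, Corollary~\ref{c:far} fails (far squares need not form a clique in $\widehat G$), and the two vertex-disjoint paths guaranteed by $2$-connectivity may terminate in squares $Q_1$, $Q_2$ that do not both lie in a single connected $N_{2c}\cap A$ region, breaking Stage~4. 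So the conjecture remains open, and your proposal, while correctly pointing at the right lemma to try to prove, does not prove it.
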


\section*{Acknowledgments}
Some of the results published in this paper were obtained in June 2006
at the Institute of Mathematics of the National University of Singapore
during the program ``11 Random Graphs and Real-world Networks.'' J.~Balogh,
B. Bollob\'as and M. Walters are grateful to the Institute for its
hospitality.\vspace*{-2pt}

%

%
\printaddresses

\end{document}